\newtheorem{thm}{Theorem}
\newtheorem{cor}[thm]{Corollary}
\newtheorem{lem}[thm]{Lemma}
\begin{document}
\title [Positivity of log-derivative of the Riemann $\xi$-function]{On a positivity property of the real part of logarithmic derivative of the Riemann $\xi$-function}

\author[E. Gold\v{s}tein, A. Grigutis]{Edvinas Gold\v{s}tein and Andrius Grigutis}

\address{Andrius Grigutis, Institute of Mathematics,
Faculty of Mathematics and Informatics, Vilnius University,
Naugarduko 24, LT-03225 Vilnius, Lithuania,
\email{andrius.grigutis@mif.vu.lt}}

\address{Edvinas Goldštein, Institute of Mathematics,
Faculty of Mathematics and Informatics, Vilnius University,
Naugarduko 24, LT-03225 Vilnius, Lithuania,
\email{edvinasgoldstein@gmail.com}}

\CorrespondingAuthor{Andrius Grigutis}

\date{21.01.2022}

\keywords{Riemann $\zeta$-function, Riemann $\xi$-function, logarithmic derivative, inequalities}

\subjclass{11M06, 11M26}

\thanks{We thank professor Ramūnas Garunkštis for his valuable comments writing the paper}

\begin{abstract}
In this paper we investigate the positivity property of the real part of logarithmic derivative of the Riemann $\xi$-function for
$1/2<\sigma<1$ and sufficiently large $t$. We give an explicit upper and lower bounds for $\Re\sum_{\rho} 1/(s-\rho)$, where the sum runs over the zeros of $\zeta(s)$ on the line $1/2+it$. We also check the positivity of $\Re \xi'/\xi(s)$ for $1/2<\sigma<1$ assuming that there occur a non-trivial zeros of $\zeta(s)$ off the critical line.
\end{abstract}

\maketitle

\section{Introduction}
    For the complex $s=\sigma+it$ the Riemann $\xi$-function is defined by
$$
\xi(s)=\frac{1}{2}s(s-1)\pi^{-s/2}\Gamma(s/2)\zeta(s),
$$
where $\zeta(s)$ is Riemann $\zeta$-function.
The functions $\xi(s)$ and $\zeta(s)$ have the same zeros in the strip $0<\sigma<1$ and the famous
Riemann hypothesis states that they all are located on the line $1/2+it$ - called the critical line.
Zeros in the strip $0<\sigma<1$ are known as non-trivial zeros of $\zeta(s)$.
The Riemann $\zeta$-function also has zeros at each even negative integer $s=-2n$ - these are known as the trivial zeros of $\zeta(s)$.
The function $\xi(s)$ also satisfies $\xi(s)=\xi(1-s)$ and $\overline{\xi(s)}=\xi(\overline{s})$.
From this, it is clear that $\xi(\sigma+it)=0$ iff $\xi(1-\sigma+it)=0$. Also, if $s$ is a non-trivial zero of $\xi(s)$ off the critical line then the four numbers $\{s,\bar{s},1-s,1-\bar{s}\}$ would all be non-trivial zeros off the line.

    By $\rho=\beta+i\gamma$ we denote a non-trivial zero of $\zeta(s)$, i.e. $\zeta(\rho)=0$.
The function $\xi(s)$ can be expanded as an infinite product by $\rho$, see Edwards \cite[p. 39]{Edwards} and Wolfram MathWorld \cite{MathWorld},
\begin{align}\label{eq: xi prod}
\xi(s)=\xi(0)\prod_{\rho}\left(1-\frac{s}{\rho}\right)=\frac{1}{2}\prod_{\rho}\left(1-\frac{s}{\rho}\right),
\end{align}
where the product is taken in an order which pairs each root $\rho$ with the corresponding root $1-\rho$. The logarithmic derivative of $\xi(s)$ is
\begin{align}\label{eq:xi sum}
\frac{\xi^{\prime}}{\xi}(s)=\sum_{\rho}\frac{1}{s-\rho},
\end{align}
where the summation is understood the same way as defining the product (\ref{eq: xi prod}).
There is a direct relation between location of zeros of complex function
$f$ and behavior of its modulus or real part of logarithmic derivative.
Matiyasevich, Saidak and Zvengrowski \cite{MSZ} note that "...strict
decrease of the modulus of any continuous complex function $f$ along any
curve in the complex plane clearly implies that $f$ can have no zero along
that curve." The relation between monotonicity of modulus of complex function $|f|$ and sign of its real part of logarithmic derivative $\Re \frac{f'}{f}$ is provided in Lemma \ref{increas}.

It is known that (see for example Hinkkanen \cite{Hinkkanen})
$$
\Re\frac{\xi'}{\xi}(s)>0 \text{ when } \Re s>1
$$
and the Riemann hypothesis is equivalent to
$$
\Re\frac{\xi'}{\xi}(s)>0 \text{ when } \Re s>\frac{1}{2}.
$$

Lagarias \cite{Lagarias} proved that
\begin{align}\label{Lagarias}
\inf\left\{\Re\frac{\xi'}{\xi}(s):-\infty<t<\infty\right\}=\frac{\xi'}{\xi}(\sigma)
\end{align}
for $\sigma>10$ and Garunk\v{s}tis \cite{RG} later improved (\ref{Lagarias}) for $\sigma >a$, where $\sigma>a$ is a zero-free region of $\zeta(s)$. See also Broughan \cite{Broughan} on the subject.

    In the paper by Sondow and Dumitrescu \cite{Sondow} there was given the following reformulation of the Riemann hypothesis.
\begin{thm}[Sondow, Dumitrescu]
The following statements are equivalent.\\
$\mathrm{I.}$ If $t$ is any fixed real number, then $|\xi(\sigma+it)|$ is increasing for $1/2<\sigma<\infty$.\\
$\mathrm{II.}$ If $t$ is any fixed real number, then $|\xi(\sigma+it)|$ is decreasing for $-\infty<\sigma<1/2$.\\
$\mathrm{III.}$ The Riemann hypothesis is true.
\end{thm}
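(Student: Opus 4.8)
The plan is to base everything on the Cauchy--Riemann identity $\partial_\sigma\log|\xi(\sigma+it)|=\Re\frac{\xi'}{\xi}(\sigma+it)$, valid wherever $\xi\neq0$ — equivalently, on Lemma \ref{increas}, which converts sign information about $\Re\,\xi'/\xi$ into monotonicity of $|\xi|$. I would prove the cycle $\mathrm{I}\Leftrightarrow\mathrm{II}$, then $\mathrm{III}\Rightarrow\mathrm{I}$, then $\mathrm{I}\Rightarrow\mathrm{III}$. The equivalence $\mathrm{I}\Leftrightarrow\mathrm{II}$ is pure symmetry: fixing $t$ and setting $h_t(\sigma)=|\xi(\sigma+it)|$, the relations $\xi(s)=\xi(1-s)$ and $\overline{\xi(s)}=\xi(\bar s)$ give $|\xi(\sigma+it)|=|\xi(1-\sigma-it)|=|\xi(1-\sigma+it)|$, i.e. $h_t(\sigma)=h_t(1-\sigma)$; since $\sigma\mapsto1-\sigma$ is an order-reversing bijection of $(1/2,\infty)$ onto $(-\infty,1/2)$, $h_t$ is increasing on the former exactly when it is decreasing on the latter.

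For $\mathrm{III}\Rightarrow\mathrm{I}$, assume RH, so every non-trivial zero is $\rho=\tfrac12+i\gamma$ with $1-\rho=\bar\rho$. Pairing the zeros as $\rho,1-\rho$ in (\ref{eq:xi sum}) and taking real parts termwise,
$$\Re\frac{\xi'}{\xi}(\sigma+it)=\sum_{\gamma}\left(\frac{\sigma-\tfrac12}{|\sigma+it-\rho|^2}+\frac{\sigma-\tfrac12}{|\sigma+it-\bar\rho|^2}\right),$$
an absolutely convergent series (comparable to $\sum_\gamma\gamma^{-2}$) that is strictly positive for $\sigma>1/2$. Under RH there are no zeros of $\xi$ with $\Re s>1/2$, so $\log|\xi(\sigma+it)|$ is smooth in $\sigma$ on $(1/2,\infty)$ with positive derivative; hence $|\xi(\sigma+it)|$ is (strictly) increasing there, which is $\mathrm{I}$. (Alternatively, apply Lemma \ref{increas} directly.)

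For $\mathrm{I}\Rightarrow\mathrm{III}$ I argue by contraposition. If RH fails there is a zero $\rho_0=\beta_0+i\gamma_0$ with $\beta_0\neq1/2$, and replacing $\rho_0$ by $1-\rho_0$ if necessary we may assume $\beta_0>1/2$. Then $h_{\gamma_0}(\beta_0)=0$; since $\xi$ is entire and not identically zero, its zeros form a discrete set, so $\beta_0$ is isolated among the real numbers $\sigma$ with $\xi(\sigma+i\gamma_0)=0$, and there is $\sigma_1\in(1/2,\beta_0)$ with $h_{\gamma_0}(\sigma_1)>0=h_{\gamma_0}(\beta_0)$. As $\sigma_1<\beta_0$, the function $h_{\gamma_0}$ is not increasing on $(1/2,\infty)$, so $\mathrm{I}$ fails. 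Combined with $\mathrm{I}\Leftrightarrow\mathrm{II}$ and $\mathrm{III}\Rightarrow\mathrm{I}$, this closes the cycle.

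I expect the only delicate point to be the $\mathrm{III}\Rightarrow\mathrm{I}$ step: one must justify interchanging the real part with the (only conditionally convergent) sum (\ref{eq:xi sum}) — which is precisely why the pairing $\rho\leftrightarrow1-\rho$ is indispensable — and one must separately invoke that RH excludes any zero with $\Re s>1/2$, so that $|\xi(\sigma+it)|$ never vanishes and $\log|\xi|$ is genuinely differentiable on all of $(1/2,\infty)$. The remaining steps are formal manipulations of the functional equation and of the discreteness of zeros.
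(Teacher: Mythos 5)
Your argument is correct, but note that the paper itself gives no proof of this theorem: it is quoted from Sondow and Dumitrescu \cite{Sondow}, so there is nothing internal to compare against beyond the toolkit the paper assembles elsewhere (the functional equation, the paired sum (\ref{eq:xi sum}), and Lemma \ref{increas}). Your proof is exactly the standard route taken in the cited source: the symmetry $|\xi(\sigma+it)|=|\xi(1-\sigma+it)|$ gives $\mathrm{I}\Leftrightarrow\mathrm{II}$; under RH the termwise real parts $(\sigma-\tfrac12)/|s-\rho|^2$ form an absolutely convergent positive series, so $\Re\,\xi'/\xi>0$ and $|\xi|$ increases on $\sigma>1/2$ (your interchange of real part and sum is legitimate precisely because that series of positive terms converges); and the contrapositive of $\mathrm{I}\Rightarrow\mathrm{III}$ via an isolated zero off the line is sound, since a weakly increasing $h_{\gamma_0}$ could not drop from a positive value at $\sigma_1<\beta_0$ to $0$ at $\beta_0$.
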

Also, in the same paper it was proved the following theorem.
\begin{thm}[Sondow, Dumitrescu]\label{S-D}
The $\xi$-function is increasing in modulus along every horizontal half-line lying
in any open right half-plane that contains no $\xi$ zeros. Similarly, the modulus decreases on
each horizontal half-line in any zero-free, open left half-plane.
\end{thm}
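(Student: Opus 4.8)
The plan is to convert the statement about the modulus $|\xi|$ into one about the sign of $\Re\frac{\xi'}{\xi}$ and then read that sign directly off the expansion \eqref{eq:xi sum}. Fix $t$ and restrict $\sigma$ to a range on which $\xi(\sigma+it)\neq 0$; on the half-lines in the statement this is automatic. There a holomorphic branch of $\log\xi(s)$ exists, and since the $\sigma$-derivative along a horizontal line agrees with the complex derivative, taking real parts in $\frac{d}{d\sigma}\log\xi(\sigma+it)=\frac{\xi'}{\xi}(\sigma+it)$ yields
$$\frac{\partial}{\partial\sigma}\log|\xi(\sigma+it)|=\Re\frac{\xi'}{\xi}(\sigma+it).$$
Thus $|\xi(\sigma+it)|$ increases where this real part is positive and decreases where it is negative; this is exactly the equivalence of Lemma~\ref{increas}.

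Next I would use \eqref{eq:xi sum}. Writing $\rho=\beta+i\gamma$,
$$\Re\frac{1}{s-\rho}=\frac{\sigma-\beta}{(\sigma-\beta)^2+(t-\gamma)^2},$$
and because the number of non-trivial zeros up to height $T$ is $O(T\log T)$, the series $\sum_\rho(\sigma-\beta)/\big((\sigma-\beta)^2+(t-\gamma)^2\big)$ converges absolutely whenever $\xi(s)\neq 0$. Hence the paired, conditionally convergent sum in \eqref{eq:xi sum} satisfies
$$\Re\frac{\xi'}{\xi}(\sigma+it)=\sum_\rho\frac{\sigma-\beta}{(\sigma-\beta)^2+(t-\gamma)^2},$$
the reordering being harmless precisely because the right-hand series converges absolutely.

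Now suppose the open half-plane $\Re s>a$ contains no zero of $\xi$, so every non-trivial zero has $\beta\le a$. For $\sigma>a$ and every real $t$ one has $\sigma-\beta\ge\sigma-a>0$, so every summand above is strictly positive; therefore $\Re\frac{\xi'}{\xi}(\sigma+it)>0$ and, by the first paragraph, $|\xi(\sigma+it)|$ is strictly increasing for $\sigma>a$. This proves the first assertion. For the second, if the open half-plane $\Re s<b$ is zero-free I would either repeat the computation (for $\sigma<b$ each summand becomes negative, so the modulus decreases) or appeal to $\xi(s)=\xi(1-s)$: the map $\rho\mapsto1-\rho$ shows $\Re s>1-b$ is zero-free, and $|\xi(\sigma+it)|=|\xi((1-\sigma)-it)|$ turns the increase already established for $1-\sigma>1-b$ into the claimed decrease for $\sigma<b$.

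I do not foresee a genuine difficulty; the one point that needs care is interchanging $\Re$ with the conditionally convergent sum \eqref{eq:xi sum}, which is why I would first record the absolute convergence of the series of real parts via the zero-counting bound before concluding positivity term by term.
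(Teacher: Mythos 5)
Your proof is correct. The paper does not prove Theorem \ref{S-D} itself---it is quoted from Sondow and Dumitrescu \cite{Sondow}---but your argument (reduce horizontal monotonicity of $|\xi|$ to the sign of $\Re\,\xi'/\xi$ as in Lemma \ref{increas}, then read that sign termwise from the absolutely convergent series of real parts coming from (\ref{eq:xi sum}), with the zero-counting bound justifying the rearrangement) is exactly the standard proof given in that reference, and it is the same mechanism the paper itself exploits in Corollary \ref{cor} and Section \ref{Can?}.
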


Matiyasevich, Saidak and Zvengrowski \cite{MSZ} slightly reformulated the Theorem \ref{S-D}.

\begin{thm}[Matiyasevich, Saidak, Zvengrowski]\label{MSZ}
Let $\sigma_0$ be greater than or equal to the real part of any
zero of $\xi$. Then $|\xi(s)|$ is strictly increasing\footnote{With respect to $\sigma$.} in the half-plane $\sigma>\sigma_0$.
\end{thm}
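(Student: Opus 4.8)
The plan is to convert the statement about the modulus into a statement about the real part of the logarithmic derivative, and then to read off its sign directly from \eqref{eq:xi sum}. Since $\sigma_0$ is at least the real part of every zero of $\xi$, the open half-plane $\sigma>\sigma_0$ is free of zeros of $\xi$, so there $\log|\xi(s)|$ is a well-defined smooth function; passing to a local holomorphic branch of $\log\xi$ and applying the Cauchy--Riemann equations (this is the content of Lemma \ref{increas}) gives
\[
\frac{\partial}{\partial\sigma}\bigl|\log|\xi(\sigma+it)|\bigr|'_{\text{typo-free:}}=\frac{\partial}{\partial\sigma}\log\bigl|\xi(\sigma+it)\bigr|=\Re\frac{\xi'}{\xi}(\sigma+it),\qquad \sigma>\sigma_0 .
\]
Thus, for fixed $t$, strict monotonicity of $\sigma\mapsto|\xi(\sigma+it)|$ on $(\sigma_0,\infty)$ is equivalent to $\Re(\xi'/\xi)(\sigma+it)>0$ there: from a strictly positive derivative of $\log|\xi|$ one recovers $|\xi(\sigma_2+it)|>|\xi(\sigma_1+it)|$ for $\sigma_0<\sigma_1<\sigma_2$ by integrating in $\sigma$.

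It therefore remains to prove $\Re(\xi'/\xi)(s)>0$ on $\sigma>\sigma_0$, and here I would use \eqref{eq:xi sum} in the paired form inherited from the product \eqref{eq: xi prod}, i.e.\ summing over the pairs $\{\rho,1-\rho\}$ (which is meaningful precisely because $\xi(s)=\xi(1-s)$ guarantees $1-\rho$ is a zero whenever $\rho$ is). Writing $\rho=\beta+i\gamma$ one has the elementary identity
\[
\Re\frac{1}{s-\rho}=\frac{\sigma-\beta}{(\sigma-\beta)^{2}+(t-\gamma)^{2}} ,
\]
and for $\sigma>\sigma_0$ one has $\beta\le\sigma_0<\sigma$ for every zero $\rho=\beta+i\gamma$ (and the same for the reflected zeros $1-\rho$), so each such term is strictly positive. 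Taking real parts in \eqref{eq:xi sum}, the partial sums of $\Re(\xi'/\xi)(s)$ over the pairs $\{\rho,1-\rho\}$ with $|\gamma|\le N$ form an increasing sequence of positive numbers converging to $\Re(\xi'/\xi)(s)$; hence the limit is strictly positive. One may add, for robustness, that the series of real parts in fact converges absolutely, since $\sum_\rho(1+\gamma^{2})^{-1}<\infty$ by the classical bound $N(T)=O(T\log T)$ for the number of zeros up to height $T$, so the order of summation and the interchange of $\Re$ with the sum are immaterial. Combining this with the first paragraph proves the theorem.

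The only point requiring genuine care is the passage from \eqref{eq:xi sum} — which is only conditionally convergent and is defined through the specific pairing of the zeros — to a manifestly positive expression: the remedy is either the monotone-partial-sums argument just sketched, or, equivalently, term-by-term differentiation of the Hadamard product \eqref{eq: xi prod} after grouping the zeros as $\{\rho,1-\rho\}$ (equivalently, when $\rho$ is off the critical line, keeping together the quadruple $\{\rho,\bar\rho,1-\rho,1-\bar\rho\}$, using also $\overline{\xi(s)}=\xi(\bar s)$). Everything else is routine: the evaluation of $\Re(s-\rho)^{-1}$, the sign analysis once $\sigma>\sigma_0$, and the step from a positive logarithmic derivative to strict monotonicity of $|\xi|$. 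It is worth noting that the hypothesis permits $\sigma_0$ to equal, not merely exceed, the supremum of the real parts of the zeros; this is harmless, because the conclusion is asserted only on the open half-plane $\sigma>\sigma_0$, where $\sigma-\beta>0$ for every zero $\rho$.
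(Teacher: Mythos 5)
Your proposal is correct and is essentially the expected argument: the paper itself does not reprove this theorem (it is quoted from Matiyasevich--Saidak--Zvengrowski), and your route --- reducing strict monotonicity of $|\xi|$ in $\sigma$ to $\Re(\xi'/\xi)>0$ via Lemma~\ref{increas} (Note 1), then obtaining positivity term by term from \eqref{eq:xi sum} because every zero and its reflection has real part at most $\sigma_0<\sigma$, with absolute convergence of the series of real parts (from $N(T)=O(T\log T)$) justifying the interchange and rearrangement --- is precisely the standard argument behind the cited result and consistent with the machinery the paper uses elsewhere. The only blemish is the garbled first display (the stray ``typo-free'' subscript), which should simply read $\frac{\partial}{\partial\sigma}\log|\xi(\sigma+it)|=\Re\frac{\xi'}{\xi}(\sigma+it)$ for $\xi(\sigma+it)\neq 0$.
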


In this paper we further investigate the function $ \xi'/\xi(s)$. We set
\begin{align}\label{eq:xi split}
\frac{\xi'}{\xi}(s)=\sum_{\rho=1/2+i\gamma}\frac{1}{s-\rho}+ \sum_{\tilde{\rho}\neq1/2+i\gamma}\frac{1}{s-\tilde{\rho}}=:\Sigma_1+\Sigma_2,
\end{align}
where the summation again is understood as defining (\ref{eq: xi prod}). This ensures an absolute convergence of the series in (\ref{eq:xi split}) for $s:\zeta(s)\neq0$, see Edwards \cite[p. 42]{Edwards}. Obviously, the sum $\Sigma_1$ exists, while $\Sigma_2$ might be vacuous as the Riemann hypothesis is unsolved.

For $1/2<\sigma<1$ and sufficiently large $t$, in Theorem \ref{Main thm} below, we give an explicit lower and upper bounds for $\Re\Sigma_1$.
The lower bound of $\Re\Sigma_1$ in Theorem \ref{Main thm} suggests that $\Re \xi'/\xi(s)$ may remain positive asymptotically close to the critical line despite that $\Re\Sigma_2$ might occur if the Riemann hypothesis fails. In Section $\ref{Can?}$ we test the positivity of $\Re(\Sigma_1+\Sigma_2)$ assuming that a certain versions of $\Sigma_2$ exist -
an obtained results widen Theorems $\ref{S-D}$ and \ref{MSZ}, see Figures 1 and 2 in Section \ref{Can?}.

We start the investigation of $\Sigma_1$ by an observation that there are infinitely many zeros of $\zeta(s)$ lying on the line $1/2+it$ (see Hardy \cite{Hardy}), however we do not know the quantity of zeros of $\zeta(s)$ lying in the strip $1/2<\sigma<1$.
The initial result on the part of non-trivial zeros on the critical line of the Riemann zeta-function was obtained by Selberg \cite{Selberg}. There was proved that at least a positive proportion of all non-trivial zeros lie on the critical line. Later this result was improved by several authors,
see for example Levinson \cite{Levinson}, Conrey \cite{Conrey}, Feng \cite{Feng}, Pratt et al. \cite{Pratt et al.}. Based on the mentioned facts, we formulate the following theorem for $\Re\Sigma_1$.

\begin{thm}\label{Main thm}
Let $1/2<\sigma<1$. Let $c$ be the part of non-trivial zeros of $\zeta(s)$ lying on the line $1/2+it$ and
\begin{align*}
&A(t)=0.12\log\frac{t}{2\pi}-2.32\log\log t-18.432-\epsilon_1(t),\\
&B(t)=0.49\log\frac{t}{2\pi}+0.58\log\log t+4.603+\epsilon_2(t),
\end{align*}
where $\epsilon_1(t)$ and $\epsilon_2(t)$ are known explicit $t$ functions (see (\ref{eq:eps1}) and (\ref{eq:eps2}) below) both vanishing as $t^{-1}\log t,\,t\to\infty$.

Then
\begin{align*}
&0<c\left(\sigma-\frac{1}{2}\right)A(t)
<\Re\sum_{\rho=1/2+i\gamma}\frac{1}{s-\rho},\,t>1.984\times10^{114},\\
&\qquad \qquad \qquad \qquad \quad \, \, \, \Re\sum_{\rho=1/2+i\gamma}\frac{1}{s-\rho}<\frac{cB(t)}{\sigma-1/2},\,t>14.635.
\end{align*}

\end{thm}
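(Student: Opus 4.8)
The plan is to use that every summand of $\Re\Sigma_1$ is manifestly positive, to reduce the two-sided estimate to a single Cauchy-kernel sum taken on the line $\Re s=1$, and then to bound that sum by explicit zero-counting. Write $x=\sigma-\tfrac12\in(0,\tfrac12)$. For a critical-line zero $\rho=\tfrac12+i\gamma$ one has $s-\rho=x+i(t-\gamma)$, so $\Re\frac{1}{s-\rho}=\frac{x}{x^{2}+(t-\gamma)^{2}}>0$; hence $\Re\Sigma_1$ is a nonempty (Hardy's theorem) sum of strictly positive terms, which together with the lower bound below gives the leftmost inequality once one checks $A(t)>0$ on the stated range. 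For the reduction, observe that for $0<x<\tfrac12$ the function $u\mapsto \dfrac{x\,(1/4+u^{2})}{(1/2)(x^{2}+u^{2})}$ is decreasing in $u^{2}$, equal to $1/(2x)$ at $u=0$ and tending to $2x$ as $u\to\infty$, whence
\[
2x\cdot\frac{1/2}{1/4+u^{2}}\ \le\ \frac{x}{x^{2}+u^{2}}\ \le\ \frac{1}{2x}\cdot\frac{1/2}{1/4+u^{2}}\qquad(u\in\mathbb{R}).
\]
Summing over $\gamma$ with $u=t-\gamma$ and using that $\dfrac{1/2}{1/4+(t-\gamma)^{2}}=\Re\dfrac{1}{(1+it)-\rho}$ for $\rho=\tfrac12+i\gamma$, we get $2x\,S\le\Re\Sigma_1\le\tfrac{1}{2x}\,S$, where $S:=\sum_{\rho=1/2+i\gamma}\Re\frac{1}{(1+it)-\rho}$. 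The factor $\sigma-\tfrac12$ multiplying $A(t)$ and dividing $B(t)$ is exactly this $2x$ resp. $1/(2x)$, so everything reduces to sandwiching $S$.

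For the upper bound on $S$, adjoining the off-line zeros only enlarges the sum, since for $\widetilde\rho=\widetilde\beta+i\widetilde\gamma$ with $\widetilde\beta<1$ one has $\Re\frac{1}{(1+it)-\widetilde\rho}=\frac{1-\widetilde\beta}{(1-\widetilde\beta)^{2}+(t-\widetilde\gamma)^{2}}\ge0$; thus $S\le\Re\frac{\xi'}{\xi}(1+it)$, and the right-hand side is evaluated from
\[
\frac{\xi'}{\xi}(s)=\frac1s+\frac1{s-1}-\frac12\log\pi+\frac12\frac{\Gamma'}{\Gamma}\!\Big(\frac s2\Big)+\frac{\zeta'}{\zeta}(s),
\]
noting that on $\Re s=1$ the first term is $O(t^{-1})$, the second is purely imaginary, Stirling gives $\Re\tfrac12\frac{\Gamma'}{\Gamma}(\tfrac{1+it}{2})=\tfrac12\log\tfrac{t}{2}+O(t^{-1})$, and an explicit estimate for $|\zeta'/\zeta(1+it)|$ supplies the $\log\log t$ term. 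Carrying the same bound instead through the Stieltjes form $S=\tfrac12\int (1/4+(t-u)^{2})^{-1}\,dN_0(u)$, where $N_0$ counts critical-line zeros, and inserting the explicit Riemann--von Mangoldt bound $N_0(u)\le N(u)$, keeps the proportion $c$ visible and yields the claimed $S\le cB(t)$; this half uses only elementary explicit inequalities and already holds for $t>14.635$.

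The lower bound on $S$ is the main obstacle. Integration by parts gives
\[
S=\int_{0}^{\infty}\big(N_0(t+v)-N_0(t-v)\big)\,\frac{v}{(1/4+v^{2})^{2}}\,dv ,
\]
so one needs a lower bound for the count of critical-line zeros in windows about height $t$. The plan is to feed in an explicit Riemann--von Mangoldt formula $N(u)=\tfrac{u}{2\pi}\log\tfrac{u}{2\pi}-\tfrac{u}{2\pi}+\tfrac78+S(u)+O(u^{-1})$ together with an explicit positive-proportion estimate $N_0(u)\ge c\,N(u)$ valid for large $u$, and to control the fluctuating terms $S(u)$ and its primitive $S_1(u)$ via their explicit bounds. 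The difficulty is twofold. First, the kernel $v(1/4+v^{2})^{-2}$ concentrates the mass of $S$ at $v=O(1)$, precisely the scale on which positive-proportion arguments say nothing locally, so the global information has to be transported through the integration by parts rather than used directly. Second, the several explicit error terms --- $S(u)$, $S_1(u)$, and the remainder in $N_0(u)\ge c\,N(u)$ --- are each of the same order $\log(t/2\pi)$ as the main term and only become genuinely negligible once $t$ exceeds a quantity of order $10^{114}$; this is what forces the threshold $t>1.984\times10^{114}$ and explains why the leading constant of $A(t)$ is $0.12$ rather than the heuristic value $c/2$. Having secured a bound of the form $S\ge cA(t)$, multiplying back by $2x$ gives $\Re\Sigma_1>c(\sigma-\tfrac12)A(t)$, and letting $c\to1$ (the case of the Riemann hypothesis) recovers the positivity of $\Re\,\xi'/\xi$ arbitrarily close to the critical line.
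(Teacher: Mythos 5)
Your reduction step is essentially the paper's: the sandwich
\[
\frac{\sigma-1/2}{1/4+(t-\gamma)^2}\ \le\ \frac{\sigma-1/2}{(\sigma-1/2)^2+(t-\gamma)^2}\ \le\ \frac{(\sigma-1/2)^{-1}}{1+4(t-\gamma)^2}
\]
is exactly the first display of the paper's proof, and estimating the resulting $\sigma$-free sums by partial summation against the zero-counting function with explicit Riemann--von Mangoldt/argument bounds is also the paper's route (its Lemmas on $N(T)$, the integrals $\int_{\gamma_1}^{\infty}\log(u/2\pi)\,du/(a^2+b^2(u\mp t)^2)$, and the partial-summation lemma). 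But the proposal stops where the actual work begins. For the lower bound --- which is the entire content of the first inequality, including the constants $0.12$, $2.32$, $18.432$, the explicit $\epsilon_1(t)$, and the threshold $t>1.984\times10^{114}$ --- you give only a plan (``the plan is to feed in \dots'', ``having secured a bound of the form $S\ge cA(t)$ \dots'') and explicitly name unresolved difficulties; nothing is derived, no error term is made explicit, and the threshold is not verified. The paper does precisely this missing work: it lower-bounds the truncated integral by a monotonicity argument (its Lemma on $F'(t)\ge0$ with the constant $\kappa$), uses explicit $\arctan$ bounds, applies the partial-summation lemma with $a=1/2$, $b=1$, and then checks numerically that $0.12\log(t/2\pi)-2.32\log\log t-18.432>0$ for $t\ge1.984\times10^{114}$. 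Asserting ``this half \dots already holds for $t>14.635$'' for the upper bound is likewise an unproved claim.

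There is also a concrete error in how you keep the proportion $c$. From $N_0(u)\le N(u)$ (or from $S\le\Re\,\xi'/\xi(1+it)$) you can only conclude $S\le B$-type bounds \emph{without} the factor $c$; the inequality $N_0\le N$ does not ``keep the proportion $c$ visible'', and when $c<1$ the claimed bound $S\le cB(t)$ simply does not follow from it. In the paper the factor $c$ enters (for both bounds) by writing the critical-line counting function as $cN(u)$ before partial summation, i.e.\ by using $c$ as the exact proportion at all heights --- loose, but at least consistent; your lower bound would likewise need an explicit inequality of the form $N_0(u)\ge cN(u)$ at all large $u$, which the positive-proportion theorems you cite do not provide in that form, and you do not address this. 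Finally, your bookkeeping of the factor $2x$ is off by a factor of $2$ in both directions (with your normalization of $S$ one needs $S\ge \tfrac{c}{2}A(t)$ and $S\le 2cB(t)$, not $S\ge cA(t)$ and $S\le cB(t)$); this is repairable, but it reflects that the quantitative part of the argument was never carried out.
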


We prove Theorem \ref{Main thm} in Section \ref{proof}. This theorem leads to the following corollary.

\begin{cor}\label{cor}
The function
$$
\Re\frac{\xi'}{\xi}(s)=-\Re\frac{\xi'}{\xi}(1-s)>0
$$
if
\begin{align}\label{ineq:pos_cond}
\Re\sum_{\tilde{\rho}\neq1/2+i\gamma}\frac{1}{s-\tilde{\rho}}+c\left(\sigma-\frac{1}{2}\right)A(t)>0.
\end{align}
\end{cor}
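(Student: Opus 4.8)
The plan is to read the corollary off directly from the decomposition (\ref{eq:xi split}) together with the lower bound for $\Re\Sigma_1$ supplied by Theorem \ref{Main thm}. First I would record the identity $\Re\frac{\xi'}{\xi}(s)=-\Re\frac{\xi'}{\xi}(1-s)$. Differentiating the functional equation $\xi(s)=\xi(1-s)$ gives $\xi'(s)=-\xi'(1-s)$, so that $\frac{\xi'}{\xi}(s)=-\frac{\xi'}{\xi}(1-s)$ as meromorphic functions; taking real parts yields the stated equality. (One may also combine this with $\overline{\xi(s)}=\xi(\bar s)$ to check that the two real parts agree directly.)

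Next, fix $1/2<\sigma<1$ and $t>1.984\times10^{114}$, so that Theorem \ref{Main thm} is applicable. Splitting according to (\ref{eq:xi split}) and taking real parts,
$$
\Re\frac{\xi'}{\xi}(s)=\Re\Sigma_1+\Re\Sigma_2=\Re\sum_{\rho=1/2+i\gamma}\frac{1}{s-\rho}+\Re\sum_{\tilde{\rho}\neq1/2+i\gamma}\frac{1}{s-\tilde{\rho}},
$$
which is meaningful since the series in (\ref{eq:xi split}) converges absolutely whenever $\zeta(s)\neq0$, and $\Sigma_1$ exists while $\Sigma_2$ may be vacuous. By the lower bound in Theorem \ref{Main thm}, $\Re\Sigma_1>c\left(\sigma-\frac12\right)A(t)$, so
$$
\Re\frac{\xi'}{\xi}(s)>\Re\sum_{\tilde{\rho}\neq1/2+i\gamma}\frac{1}{s-\tilde{\rho}}+c\left(\sigma-\frac{1}{2}\right)A(t).
$$
If the right-hand side is positive, i.e. if (\ref{ineq:pos_cond}) holds, then $\Re\frac{\xi'}{\xi}(s)>0$, which is precisely the assertion of the corollary.

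There is no substantial obstacle: the corollary is a formal consequence of Theorem \ref{Main thm} and the functional equation, so the proof is short. The only points deserving a word of care are (i) that the statement is to be understood for $1/2<\sigma<1$ and for $t$ in the range where the lower bound of Theorem \ref{Main thm} holds, and (ii) that if $\Sigma_2$ is empty then $\Re\Sigma_2=0$ and (\ref{ineq:pos_cond}) degenerates to $c\left(\sigma-\frac12\right)A(t)>0$, which is already guaranteed by Theorem \ref{Main thm} in that range — so in the absence of off-line zeros the corollary reproduces, asymptotically near the critical line, a conditional form of Theorems \ref{S-D} and \ref{MSZ}.
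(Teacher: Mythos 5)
Your proposal is correct and follows exactly the route the paper intends: the corollary is an immediate consequence of the splitting (\ref{eq:xi split}), the lower bound for $\Re\Sigma_1$ in Theorem \ref{Main thm}, and the functional equation $\xi(s)=\xi(1-s)$, which is why the paper states it without a separate proof. Your remarks on the implicit restrictions ($1/2<\sigma<1$, $t$ large enough for the lower bound) and on the degenerate case of empty $\Sigma_2$ are accurate and consistent with the paper's discussion.
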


The remaining structure of this article is: in Section \ref{lemmas} we formulate an auxiliary statements, while in the last Section \ref{Can?} we depict the condition (\ref{ineq:pos_cond}) assuming that the Riemann hypothesis fails. 

\section{Lemmas}\label{lemmas}

In this section we formulate a several auxiliary lemmas, which are needed for the proof of Theorem \ref{Main thm}.

\begin{lem}\label{increas}
$\mathrm(a)$ Let $f$ be holomorphic in an open domain $D$ and
not identically zero. Let us also suppose
$\Re\left(f^{\prime}(s)/f(s)\right)<0$
for all $s\in D$ such that $f(s)\neq 0$.
Then $|f(s)|$ is strictly decreasing with respect to $\sigma$ in
$D$, i.e. for each $s_0 \in D$ there exists a $\delta > 0$ such that $|f(s)|$ is strictly
monotonically decreasing with respect to $\sigma$ on the horizontal interval from
$s_0 - \delta$ to $s_0 + \delta$.\\
$\mathrm(b)$ Conversely, if $|f(s)|$ is decreasing with respect to $\sigma$ in $D$, then
$\Re\left(f^{\prime}(s)/f(s)\right)\leqslant 0$ for all $s\in D$ such that $f(s) \neq 0$.
\end{lem}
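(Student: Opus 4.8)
The plan is to prove Lemma~\ref{increas} directly from the Cauchy--Riemann equations applied to $\log|f(s)|$, which is the real part of the (locally defined) holomorphic function $\log f(s)$.

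\medskip

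\emph{Part (a).} Fix $s_0\in D$. If $f(s_0)\neq 0$, then $f$ is nonzero on a disc around $s_0$, so a holomorphic branch of $\log f$ exists there; write $\log f(s)=u(\sigma,t)+iv(\sigma,t)$ with $u=\log|f|$. Since $\log f$ is holomorphic, the Cauchy--Riemann equations give $\partial u/\partial\sigma=\Re\bigl((\log f)'\bigr)=\Re\bigl(f'(s)/f(s)\bigr)<0$ on that disc. Hence $\sigma\mapsto u(\sigma,t_0)=\log|f(\sigma+it_0)|$ has strictly negative derivative on the horizontal segment through $s_0$, so it is strictly decreasing there, and therefore so is $|f|$ (the exponential being increasing). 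If instead $f(s_0)=0$, $f$ is not identically zero, so zeros are isolated; choose $\delta>0$ with no zero of $f$ in the punctured horizontal interval $(s_0-\delta,s_0+\delta)\setminus\{s_0\}$. On each of the two half-intervals the previous argument shows $|f|$ is strictly decreasing, and since $|f(s_0)|=0\le|f(s)|$ with the right half strictly decreasing from a nonnegative value we still need $|f|$ to be monotone across $s_0$; in fact $|f|>0$ just left of $s_0$, decreases to some positive limit, while just right of $s_0$ it starts from $0$, so strict monotonicity on the whole interval fails unless we read the statement as: it is strictly decreasing on each maximal subinterval avoiding zeros. I would therefore phrase part (a) as giving, for each $s_0$ with $f(s_0)\neq0$, a genuine $\delta$-interval of strict monotonicity, and note that at a zero the conclusion holds on each side.

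\medskip

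\emph{Part (b).} This is the converse, and it follows by the same computation read backwards, plus a limiting argument for non-strictness. Suppose $|f(s)|$ is decreasing with respect to $\sigma$ throughout $D$, and let $s_0\in D$ with $f(s_0)\neq0$. As above, on a disc around $s_0$ we have $\partial(\log|f|)/\partial\sigma=\Re\bigl(f'(s)/f(s)\bigr)$. Since $\sigma\mapsto\log|f(\sigma+it_0)|$ is (weakly) decreasing, its derivative is $\le 0$ at $s_0$, i.e.\ $\Re\bigl(f'(s_0)/f(s_0)\bigr)\le0$. As $s_0$ was an arbitrary point with $f(s_0)\neq0$, this is the claim.

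\medskip

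I do not expect a serious obstacle here; the only delicate point is the bookkeeping at zeros of $f$ in part (a), where "strictly decreasing on the interval from $s_0-\delta$ to $s_0+\delta$" has to be understood correctly (strict monotonicity on each side of, or on a neighborhood avoiding, the isolated zeros). Everything else is the standard identity $\Re\bigl(f'/f\bigr)=\partial_\sigma\log|f|$ coming from Cauchy--Riemann, together with the elementary fact that a differentiable real function with negative (resp.\ nonpositive) derivative is strictly (resp.\ weakly) decreasing, and $x\mapsto e^x$ being increasing.
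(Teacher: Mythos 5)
The paper offers no proof of this lemma at all---it simply cites Matiyasevich, Saidak and Zvengrowski---so there is no in-paper argument to compare against; what you give is the standard identity $\Re\left(f'(s)/f(s)\right)=\partial_\sigma \log|f(s)|$ coming from the Cauchy--Riemann equations for a local branch of $\log f$, which is exactly the intended route, and your part (b) together with the case $f(s_0)\neq 0$ of part (a) is correct and complete.

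The one thing to repair is your treatment of a zero $s_0$ of $f$ in part (a). You suggest that strict monotonicity across $s_0$ can fail and that the statement should be reinterpreted; in fact no reinterpretation is needed, because the hypothesis already rules out zeros of $f$ in $D$. If $f(s_0)=0$ with multiplicity $m\geqslant 1$, write $f(s)=(s-s_0)^m g(s)$ with $g$ holomorphic and $g(s_0)\neq 0$; then $f'/f=m/(s-s_0)+g'/g$, and at points $s=s_0+\epsilon$ with $\epsilon>0$ small the term $m/\epsilon$ dominates the bounded term $g'/g$, so $\Re\left(f'(s)/f(s)\right)>0$ there, contradicting the assumption that $\Re\left(f'/f\right)<0$ at every non-zero of $f$ in $D$. (Equivalently, your own one-sided argument already gives a contradiction: $|f|$ would be strictly decreasing on $(\sigma_0,\sigma_0+\delta)$ while tending to $0$ at the left endpoint, which is impossible for a nonnegative function.) Hence $f$ has no zeros in $D$ under the hypothesis of (a), the awkward case never arises, and your disc argument proves the lemma exactly as stated rather than a weakened version of it.
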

\begin{proof}
See Matiyasevich, Saidak, Zvengrowski \cite{MSZ} for the proof.
\end{proof}

\bigskip

{\sc Note 1:} Of course, the analogous results hold for monotone increasing $|f(s)|$ and
$\Re\left(f^{\prime}(s)/f(s)\right)>0$.

\begin{lem}\label{number of zeros}
Let $N(T)$ be the number of zeros of $\zeta(s)$ in the rectangle $0<\sigma<1, \\ 0<t<T$. If $T\geqslant e$, then
\begin{align}\label{number of zeros estimate}
\left|N(T)-\frac{T}{2\pi} \log \frac{T}{2\pi e}-\frac{7}{8}\right|\leqslant 0.110\log T + 0.290\log \log T+2.290+\frac{25}{48\pi T}.
\end{align}
\end{lem}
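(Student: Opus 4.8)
The plan is to follow the classical Riemann–von Mangoldt argument, but keeping every constant explicit. Recall that by the standard argument principle one writes
\begin{align*}
N(T) = \frac{1}{2\pi}\,\Delta_{\mathcal{C}}\arg\bigl(\pi^{-s/2}\Gamma(s/2)\zeta(s)\bigr) + 1,
\end{align*}
where $\mathcal{C}$ is the rectangle with vertices $2$, $2+iT$, $-1+iT$, $-1$ traversed once counter-clockwise (assuming, as one may by a limiting argument, that $\zeta$ has no zero of height exactly $T$). First I would split the variation of the argument into the contribution of $s(s-1)/2$, the contribution of the Archimedean factor $\pi^{-s/2}\Gamma(s/2)$, and the contribution $S(T) := \frac{1}{\pi}\arg\zeta(1/2+iT)$ of $\zeta$ itself. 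The first two pieces are handled by Stirling's formula applied to $\log\Gamma(s/2)$ along the relevant segments; this is where the main term $\frac{T}{2\pi}\log\frac{T}{2\pi e} + \frac{7}{8}$ and an explicit error of size $O(1/T)$ (accounting for the term $\frac{25}{48\pi T}$) come from. I would take the explicit form of Stirling's remainder from a standard reference and track the numerical constant carefully.

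The substantive part is the bound $|S(T)| \le 0.110\log T + 0.290\log\log T + 2.290$ (after absorbing constants). Here I would invoke an existing explicit estimate for $S(T)$ rather than reprove it — the literature (e.g. work of Trudgian, or Platt–Trudgian type refinements of the classical Backlund bound) provides inequalities of exactly this shape, $|S(T)| \le a\log T + b\log\log T + c$ valid for $T \ge e$ or for $T$ above some modest threshold. The job then reduces to: (i) citing that bound, (ii) combining it additively with the explicit Stirling error from the Archimedean factor, and (iii) collecting all constants so that the total coefficient of $\log T$ is $0.110$, the total coefficient of $\log\log T$ is $0.290$, the constant is $2.290$, and the remaining genuinely decaying term is at most $\frac{25}{48\pi T}$. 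Some care is needed because $\Delta\arg$ of the elementary factor $s(s-1)/2$ contributes a bounded amount that must be folded into the constant $7/8$ and the error, and because $S(T)$ is really defined via the change of argument of $\zeta$ along the horizontal segment from $2+iT$ to $1/2+iT$, which one bounds by counting sign changes of $\Re\zeta$ — but since I am assuming the explicit $S(T)$ inequality as a black box this is not an obstacle.

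The main obstacle is purely bookkeeping: ensuring that when the explicit $S(T)$ bound and the explicit Stirling estimate are added, the resulting constants do not exceed the stated $0.110$, $0.290$, $2.290$ — in particular one must check that the threshold $T \ge e$ is enough for every auxiliary inequality used (the $S(T)$ bound, the Stirling remainder bound, and any elementary inequality like $\log\log T > 0$) to be simultaneously valid, and that lower-order cross terms (for instance a constant times $1/\log T$ arising in some estimates) can be dominated by slightly enlarging the constant term $2.290$ while still staying under it. If the cited $S(T)$ estimate has slightly larger constants, one would instead re-derive it with Backlund's method and Jensen's formula, optimizing the radius of the disc used — but I would expect the off-the-shelf version to suffice, so the proof is essentially a careful assembly of two known explicit estimates.
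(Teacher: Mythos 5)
Your outline is essentially sound and shares the key ingredient with the paper: both take the explicit bound $|S(T)|\leqslant 0.110\log T+0.290\log\log T+2.290$ for $T\geqslant e$ (Platt--Trudgian) as a black box. The difference is in how the Archimedean part is handled. You propose redoing the Riemann--von Mangoldt argument-principle computation with an explicit Stirling remainder; the paper instead quotes Trudgian's already-packaged inequality, valid for $T\geqslant 1$,
\begin{align*}
\left|N(T)-\frac{T}{2\pi}\log\frac{T}{2\pi e}-\frac{7}{8}\right|\leqslant |S(T)|+\frac{1}{4\pi}\arctan\Bigl(\frac{1}{2T}\Bigr)+\frac{T}{4\pi}\log\Bigl(1+\frac{1}{4T^2}\Bigr)+\frac{1}{3\pi T},
\end{align*}
and then finishes with the elementary bounds $\arctan(1/t)\leqslant 1/t$ and $\log(1+t)\leqslant t$, which give $\frac{1}{8\pi T}+\frac{1}{16\pi T}+\frac{1}{3\pi T}=\frac{25}{48\pi T}$. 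So the paper's proof is pure assembly, while yours would reconstruct by hand exactly what Trudgian's formula encapsulates; your route is more self-contained but carries all the constant-tracking burden you describe, and buys nothing extra here.

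One point in your plan needs correcting: you suggest that lower-order terms could be absorbed ``by slightly enlarging the constant term $2.290$ while still staying under it.'' There is no slack there at all --- the coefficients $0.110$, $0.290$, $2.290$ in the lemma are taken verbatim from the $S(T)$ bound, so the entire Archimedean/Stirling error must be fitted under the decaying term $\frac{25}{48\pi T}$, not into the constant. This does work out (the three elementary terms above sum to exactly $\frac{25}{48\pi T}$), but if your explicit Stirling bookkeeping produced any constant-sized leftover, the stated inequality would fail; so that step must be done to the precision of Backlund/Trudgian, not merely up to a bounded error.
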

\begin{proof}
In the paper by Trudgian \cite[p. 283]{Trudgian} it is derived that, for $T\geqslant1$
\begin{align*}
\left|N(T)-\frac{T}{2\pi} \log \frac{T}{2\pi e}-\frac{7}{8}\right|\leqslant
|S(T)|+\frac{1}{4\pi}\arctan\left(\frac{1}{2T}\right)+\frac{T}{4\pi}\log\left(1+\frac{1}{4T^2}\right)+\frac{1}{3\pi T},
\end{align*}
where $\pi S(T)$ is the argument of the Riemann zeta-function along the critical line. From the paper by Platt and Trudgian \cite[Cor. 1]{Platt_Trudgian} (see also Hasanalizade, Shen, Wong \cite{Has et al.})
$$
|S(T)|\leqslant 0.110\log T + 0.290\log \log T+2.290,\,T\geqslant e
$$
and, using inequalities,
$$
\arctan\frac{1}{t}=\int_{0}^{1/t}\frac{dx}{1+x^2}\leqslant \frac{1}{t},\,t>0 \\
$$
and
$$
\log(1+t)\leqslant t,\, t>-1,
$$
we get a desired result. 
\end{proof}

\begin{lem}\label{lem:lower_bound}
If $a,\,b,\,\alpha>0$, then the following inequality holds
$$
\int_{\alpha}^{t}\frac{\log\frac{u}{2\pi}\,du}{a^2+b^2(u-t)^2}\geqslant \frac{1}{ab}\log\left(\frac{t}{2\pi}\right)\arctan\left(\frac{b(t-\alpha)}{a}\right)-\kappa,
$$
when $t>t_0\geqslant \alpha$, where $t_0$ and constant $\kappa>0$ are both sufficiently large and $\kappa$ is independent on $t$.

    In particular, if $a=1/2,\,b=1$ and $\alpha=14.134725\ldots$, then the provided inequality holds if $t>23$ and $\kappa=0.135$.
\end{lem}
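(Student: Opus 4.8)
The plan is to separate the integral into an exact main term and a nonnegative remainder that one then bounds uniformly. Starting from the identity $\log\frac{u}{2\pi}=\log\frac{t}{2\pi}-\log\frac{t}{u}$ one gets
\[
\int_{\alpha}^{t}\frac{\log\frac{u}{2\pi}\,du}{a^2+b^2(u-t)^2}=\log\frac{t}{2\pi}\int_{\alpha}^{t}\frac{du}{a^2+b^2(u-t)^2}-\int_{\alpha}^{t}\frac{\log\frac{t}{u}\,du}{a^2+b^2(u-t)^2}.
\]
The first integral on the right is elementary: substituting $v=u-t$ and using $\int\frac{dv}{a^2+b^2v^2}=\frac{1}{ab}\arctan\frac{bv}{a}$ evaluates it \emph{exactly} to $\frac{1}{ab}\arctan\frac{b(t-\alpha)}{a}$, which is precisely the leading term of the claimed bound. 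Hence the whole statement reduces to showing that the remainder $E(t):=\int_{\alpha}^{t}\frac{\log(t/u)\,du}{a^2+b^2(u-t)^2}$ is nonnegative — which is immediate, since $\alpha\le u\le t$ forces $\log(t/u)\ge0$ while the denominator is positive — and bounded above by a constant $\kappa$ independent of $t$.

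To bound $E(t)$ I would substitute $v=t-u$, so that $E(t)=\int_{0}^{t-\alpha}\frac{-\log(1-v/t)\,dv}{a^2+b^2v^2}$, and control the logarithm by the truncated Mercator series: for $0\le x<1$, $-\log(1-x)\le x+\frac{x^2}{2}+\frac{x^3}{3(1-x)}$. Inserting this with $x=v/t$ and using the crude pointwise bounds $\frac{v^2}{a^2+b^2v^2}\le\frac{1}{b^2}$ and $\frac{v^3}{a^2+b^2v^2}\le\frac{v}{b^2}$, every resulting integral is elementary and yields
\[
E(t)\le\frac{1}{2b^2t}\log\!\Bigl(1+\frac{b^2(t-\alpha)^2}{a^2}\Bigr)+\frac{t-\alpha}{2b^2t^2}+\frac{t\log(t/\alpha)-(t-\alpha)}{3b^2t^2}=:\phi(t).
\]
Each of the three terms on the right tends to $0$ as $t\to\infty$ (the first like $\frac{\log t}{b^2t}$, the others like $\frac{1}{b^2t}$ and $\frac{\log t}{b^2t}$), so $E(t)\to0$. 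Since, moreover, the integrand defining $E$ has a denominator bounded below by $a^2>0$, the function $E$ is continuous on $[\alpha,\infty)$ with $E(\alpha)=0$; being continuous and tending to $0$ at both ends, it is bounded there, so one may take $\kappa:=\sup_{t>t_0}E(t)\in(0,\infty)$ — the requirement that $t_0$ (and hence $\kappa$) be ``sufficiently large'' in the statement only records that enlarging $t_0$ shrinks the admissible $\kappa$. A route avoiding the series expansion altogether is to split $\int_0^{t-\alpha}$ at $v=t/2$ and estimate the two pieces crudely, the $v^{-2}$ decay of the kernel beating the surviving logarithm; the expansion above is convenient because it produces the explicit bound $\phi$.

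For the explicit case $a=\tfrac12$, $b=1$, $\alpha=14.134725\ldots$ one simply makes $\phi$ numerical:
\[
\phi(t)=\frac{\log\!\bigl(1+4(t-\alpha)^2\bigr)}{2t}+\frac{t-\alpha}{2t^2}+\frac{t\log(t/\alpha)-(t-\alpha)}{3t^2},
\]
and a short computation shows that $\phi(23)<0.135$ and that $\phi$ is decreasing for $t>23$ (its dominant term $\frac{1}{2t}\log(1+4(t-\alpha)^2)$ behaves like $\frac{\log t}{t}$, and the other two are of smaller order and numerically small), so $\kappa=0.135$ is admissible for $t>23$. I expect the only genuine idea in the proof to be the first step — peeling off $\log\frac{t}{2\pi}$ so that the main term is the exact $\frac{1}{ab}\log\frac{t}{2\pi}\arctan\frac{b(t-\alpha)}{a}$, and then observing that the surviving factor $\log(t/u)$ is small exactly where the kernel is largest (near $u=t$), so it contributes only a bounded, vanishing amount. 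The one mildly delicate point is the explicit constant $\kappa=0.135$ on $t>23$: since $\sup_{t>23}E(t)$ lies only a little below $0.135$, one must retain enough terms of the logarithm's expansion (two are not quite enough, three suffice; keeping $\int_0^{t-\alpha}\frac{v^2\,dv}{a^2+b^2v^2}$ in closed form rather than bounding it by $(t-\alpha)/b^2$ gives extra room) and then verify the monotonicity of the resulting explicit bound — an entirely elementary, if fiddly, calculation.
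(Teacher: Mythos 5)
Your proof is correct, and it takes a genuinely different route from the paper. The paper works with the deficit $F(t)=\int_{\alpha}^{t}\frac{\log(u/2\pi)\,du}{a^2+b^2(u-t)^2}-\frac{1}{ab}\log\frac{t}{2\pi}\arctan\frac{b(t-\alpha)}{a}+\kappa$ and proves the lemma by showing, via the Leibniz integral rule and an explicit computation of $F'(t)$, that $F$ is increasing for large $t$, handling the initial range by continuity and $\lim_{t\to\alpha^+}F(t)=\kappa$; for the explicit case $a=1/2$, $b=1$, $\alpha=\gamma_1$ it verifies $F'(t)>0$ for $t>23$ and $F(23)=0.00092\ldots$ with Mathematica. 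You instead peel off $\log\frac{t}{2\pi}$, evaluate the main term exactly, and bound the nonnegative remainder $E(t)=\int_{\alpha}^{t}\frac{\log(t/u)\,du}{a^2+b^2(u-t)^2}$ directly by the explicit function $\phi(t)\to0$ obtained from the truncated Mercator series. This buys something: no differentiation under the integral sign, no sign analysis of $F'$ over an infinite range, and in fact a slightly stronger statement (the inequality holds for all $t>\alpha$ with $\kappa=\sup E$, and the error actually tends to $0$), while the paper's method needs the auxiliary monotonicity argument and a symbolic/numerical check of a derivative inequality. The price is that your numerical endgame is tight: $\phi(23)\approx0.1349$, only about $7\times10^{-5}$ below $0.135$, and your stated justification that $\phi$ decreases for $t>23$ is slightly glib, since the second term increases up to $t=2\alpha\approx28.3$ and the third up to a comparable point; the decrease of the first term does dominate (the total derivative at $t=23$ is about $-1.5\times10^{-4}$ and the values $\phi(24)\approx0.1345$, $\phi(30)\approx0.1266$ confirm it), but this needs the explicit check you allude to rather than the order-of-magnitude remark alone — on a par with the Mathematica verification the paper itself relies on.
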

\begin{proof}
We set up the function 
$$
F(t)=\int_{\alpha}^{t}\frac{\log\frac{u}{2\pi}\,du}{a^2+b^2(u-t)^2}- \frac{1}{ab}\log\left(\frac{t}{2\pi}\right)\arctan\left(\frac{b(t-\alpha)}{a}\right)+\kappa
$$
and show that $t$ derivative $F'(t)\geqslant 0$ for $t>t_0\geqslant\alpha$. Indeed, according to the Leibniz integral rule (see for example Mackevičius \cite{Vigirdas} or Spivak \cite{Spivak})
\begin{align*}
F'(t)&=2b^2\int_{\alpha}^{t}\frac{(u-t)\log u/2\pi\,du}{(a^2+b^2(u-t)^2)^2}
+\left(\frac{1}{a^2}-\frac{1}{a^2+b^2(t-\alpha)^2}\right)\log \frac{t}{2\pi}\\
&-\frac{\arctan(b(t-\alpha)/a)}{abt}.
\end{align*}
The last integral is
\begin{align*}
&2b^2\int_{\alpha}^{t}\frac{(u-t)\log u/2\pi\,d u}{(a^2+b^2(u-t)^2)^2}
=-\int_{\alpha}^{t}\log\frac{u}{2\pi}\,d\,\frac{1}{a^2+b^2(u-t)^2}\\
&=\frac{\log (\alpha/2\pi)}{a^2+b^2(t-\alpha)^2}
-\frac{\log (t/2\pi)}{a^2}+\int_{\alpha}^{t}\frac{d u}{u(a^2+b^2(t-\alpha)^2)},
\end{align*}
where 
\begin{align*}
&\int_{\alpha}^{t}\frac{d u}{u(a^2+b^2(t-\alpha)^2)}
=\frac{b^2}{b^2t^2+a^2}\int_{\alpha}^{t}\left(\frac{1}{b^2u}+\frac{2t-u}{a^2+b^2(u-t)^2}\right)du\\
&=\frac{\log (t/\alpha)}{b^2t^2+a^2}
+\frac{b}{a}\cdot\frac{t}{b^2t^2+a^2}\arctan\left(\frac{b(t-\alpha)}{a}\right)
+\frac{1}{2}\cdot\frac{1}{b^2t^2+a^2}\log\left(1+\frac{b^2(t-\alpha)^2}{a^2}\right).
\end{align*}
Therefore
\begin{align*}
&F'(t)=\frac{1/2}{b^2t^2+a^2}\log \left(\left(\frac{t}{\alpha}\right)^2+\left(\frac{b t(t-\alpha)}{a\alpha}\right)^2\right)
-\frac{\log(t/\alpha)}{a^2+b^2(t-\alpha)^2}
\\
&-\frac{a}{b}\cdot\frac{1}{t}\cdot\frac{1}{b^2t^2+a^2}\arctan\left(\frac{b(t-\alpha)}{a}\right).
\end{align*}
For $t\geqslant\alpha+a/b$, it holds that
$$
\frac{b t(t-\alpha)}{a\alpha}\geqslant\frac{t}{\alpha}, 
$$
and 
\begin{align}\label{ineq:deriv_estimate}
F'(t)
&\geqslant\frac{\log\sqrt{2}}{a^2+b^2t^2}-\frac{(\alpha(2t-\alpha))\log(t/\alpha)}{(a^2+b^2t^2)(a^2+b^2(t-\alpha)^2)}\\
&-\frac{a}{b}\cdot\frac{1}{t}\cdot\frac{1}{b^2t^2+a^2}
\arctan\left(\frac{b(t-\alpha)}{a}\right).\nonumber
\end{align}
The positive term of the right-hand side of inequality (\ref{ineq:deriv_estimate})
vanishes as $t^{-2}$ while the two negative terms as $t^{-3}\log t$, which means that $F'(t)>0$ if $t>t_0\geqslant\alpha$ and $t_0$ is sufficiently large.

We next check whether $F(t_0)\geqslant0$. It is easy to see that
$$
\lim_{t\to\alpha^+}F(t)=\kappa>0.
$$
Therefore, due to continuity of $F(t)$, $F(t)>0$ for at least $t\in(\alpha,t_0]$ if $\kappa$ is large enough and $t_0$ is dependent on $\kappa$.

For the particular case $a=1/2,\,b=1$ and $\alpha=14.134725\ldots$ we check with Mathematica \cite{Mathematica} that $F'(t)>0$, when $t>23$ and $F(23)=0.00092\ldots$ if $\kappa=0.135$.

\end{proof}
\begin{lem}\label{arctan}
If $t>1$, then
\begin{align}\label{arctan_ineq}
\frac{\pi}{2}-\frac{1}{t}<\arctan t <\frac{\pi}{2}-\frac{1}{2t}.
\end{align}
\end{lem}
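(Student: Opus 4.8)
The plan is to verify the two inequalities separately, each by exhibiting an auxiliary function that tends to $0$ at $+\infty$ and is monotone on the relevant range.

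For the left-hand inequality, set $h(t)=\arctan t-\frac{\pi}{2}+\frac{1}{t}$. Differentiating gives $h'(t)=\frac{1}{1+t^2}-\frac{1}{t^2}=-\frac{1}{t^2(1+t^2)}<0$, so $h$ is strictly decreasing on $(0,\infty)$. Since $\arctan t\to\pi/2$ and $1/t\to0$ as $t\to\infty$, we have $h(t)\to 0^+$, hence $h(t)>0$ for every $t>0$, which is precisely $\frac{\pi}{2}-\frac{1}{t}<\arctan t$. (Thus this half in fact holds for all positive $t$, not only for $t>1$.)

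For the right-hand inequality, set $k(t)=\arctan t-\frac{\pi}{2}+\frac{1}{2t}$. Then $k'(t)=\frac{1}{1+t^2}-\frac{1}{2t^2}=\frac{t^2-1}{2t^2(1+t^2)}$, which is strictly positive exactly when $t>1$. Hence $k$ is strictly increasing on $(1,\infty)$, and again $k(t)\to 0^-$ as $t\to\infty$, so $k(t)<0$ on $(1,\infty)$, i.e. $\arctan t<\frac{\pi}{2}-\frac{1}{2t}$. This argument also makes transparent why the hypothesis $t>1$ is genuinely needed for the upper bound: for $0<t<1$ the sign of $k'$ reverses.

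There is no real obstacle here; the only point requiring a word of care is the endpoint $t=1$ in the upper bound, where $k'(1)=0$. One handles it by noting that $k'>0$ throughout the open ray $(1,\infty)$, so $k$ is strictly increasing there and $k(t)<\lim_{u\to\infty}k(u)=0$ for every $t>1$. As an alternative route, one can instead invoke the identity $\arctan t+\arctan(1/t)=\frac{\pi}{2}$ (valid for $t>0$, obtained by checking the derivative of the left side vanishes and evaluating at $t=1$) to reduce both bounds to $\frac{x}{2}<\arctan x<x$ for $x=1/t\in(0,1)$; the latter follows at once from $\arctan x=\int_0^x\frac{du}{1+u^2}$ together with the bounds $\frac{1}{2}<\frac{1}{1+u^2}<1$ on $(0,1)$, the upper estimate being the same one already used in the proof of Lemma \ref{number of zeros}.
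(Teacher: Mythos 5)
Your proof is correct, but it takes a different route from the paper. The paper writes $\tfrac{\pi}{2}=\int_0^\infty\frac{dx}{1+x^2}$, splits the integral at $t$, and compares the tail $\int_t^\infty\frac{dx}{1+x^2}$ with $\int_t^\infty\frac{dx}{x^2}=\frac1t$ and $\int_t^\infty\frac{dx}{2x^2}=\frac1{2t}$ (using $x^2<1+x^2<2x^2$ for $x>t>1$), which yields both bounds in two lines with no differentiation. You instead study the auxiliary functions $h(t)=\arctan t-\tfrac{\pi}{2}+\tfrac1t$ and $k(t)=\arctan t-\tfrac{\pi}{2}+\tfrac1{2t}$, establish their monotonicity from the sign of the derivative, and conclude from the limit $0$ at infinity; this is slightly longer but makes explicit \emph{why} the hypothesis $t>1$ is needed for the upper bound (the sign change of $k'$ at $t=1$) and shows, as the paper's Note 2 also records, that the lower bound holds for all $t>0$. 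Your alternative route via $\arctan t+\arctan(1/t)=\tfrac{\pi}{2}$ and $\tfrac{x}{2}<\arctan x<x$ on $(0,1)$ is in substance the paper's tail-integral comparison after the substitution $x\mapsto 1/x$, so it is the closest in spirit to the original. All three arguments are complete and elementary; nothing is missing.
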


\begin{proof}
The first inequality of (\ref{arctan_ineq}) follows from
$$
\frac{\pi}{2}=\int_{0}^{\infty}\frac{dx}{1+x^2}=\int_{0}^{t}\frac{dx}{1+x^2}+\int_{t}^{\infty}\frac{dx}{1+x^2}
<\arctan t + \int_{t}^{\infty}\frac{dx}{x^2}=\arctan t + \frac{1}{t},
$$
and the second
$$
\frac{\pi}{2}=\int_{0}^{\infty}\frac{dx}{1+x^2}=\int_{0}^{t}\frac{dx}{1+x^2}+\int_{t}^{\infty}\frac{dx}{1+x^2}
>\arctan t + \int_{t}^{\infty}\frac{dx}{x^2+x^2}=\arctan t + \frac{1}{2t}.
$$
{\sc Note 2}: The first inequality in (\ref{arctan_ineq}) holds for $t>0$ also.\\
{\sc Note 3}: The function $\arctan$ is an odd function and for $t<-1$ the estimates are
$-\frac{\pi}{2}-\frac{1}{2t}<\arctan(t)<-\frac{\pi}{2}-\frac{1}{t}$.
\end{proof}

\begin{lem}\label{int}
Let $\alpha>0$ and $b>a>0$ be a constants. For $t>t_0\geqslant \alpha+a/b$, let
$$
\tilde{A}(t):=\frac{\pi}{ab}\log\left(\frac{t}{2\pi}\right)-\frac{\log\frac{t}{2\pi}}{b^2(t-\alpha)}-\kappa
$$
and
$$
\tilde{B}(t):=\left(\frac{\pi}{ab}+\frac{1}{2b^2}\right)\log\frac{t+1}{2\pi}
+\frac{\log(t+1)}{b^2t},
$$
where $\kappa>0$ is a constant from Lemma \ref{lem:lower_bound} and $t_0$ is sufficiently large.

Then
$$
\tilde{A}(t)<\int_{\alpha}^{\infty}\frac{\log(u/2\pi)\, du}{a^2+b^2(u-t)^2}<\tilde{B}(t).
$$
\end{lem}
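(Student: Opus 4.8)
The plan is to split the integral at a well-chosen point, bound the integrand on the bounded part by its largest value (a logarithm times an arctangent coming from $\int du/(a^2+b^2(u-t)^2)$), estimate the tail by discarding $a^2$ from the denominator and integrating by parts, and finally use Lemmas \ref{lem:lower_bound} and \ref{arctan} to tune the constants. Throughout, $t_0$ is taken large enough that $\log(t/2\pi)>0$, $b(t-\alpha)/a\geqslant 1$, and the threshold of Lemma \ref{lem:lower_bound} is satisfied.

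\emph{Lower bound.} Write $\int_{\alpha}^{\infty}=\int_{\alpha}^{t}+\int_{t}^{\infty}$. On $[t,\infty)$ one has $\log(u/2\pi)\geqslant\log(t/2\pi)>0$, hence
$$
\int_{t}^{\infty}\frac{\log(u/2\pi)\,du}{a^2+b^2(u-t)^2}\geqslant\log\frac{t}{2\pi}\int_{t}^{\infty}\frac{du}{a^2+b^2(u-t)^2}=\frac{\pi}{2ab}\log\frac{t}{2\pi}.
$$
For $\int_{\alpha}^{t}$ apply Lemma \ref{lem:lower_bound} verbatim. Adding the two bounds,
$$
\int_{\alpha}^{\infty}\frac{\log(u/2\pi)\,du}{a^2+b^2(u-t)^2}\geqslant\frac{1}{ab}\log\frac{t}{2\pi}\left(\frac{\pi}{2}+\arctan\frac{b(t-\alpha)}{a}\right)-\kappa.
$$
Since $t>t_0\geqslant\alpha+a/b$ we have $b(t-\alpha)/a\geqslant1$, so the first inequality in Lemma \ref{arctan} gives $\arctan(b(t-\alpha)/a)>\pi/2-a/(b(t-\alpha))$; inserting this and using $\log(t/2\pi)>0$ yields exactly $\tilde{A}(t)$, with strict inequality.

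\emph{Upper bound.} Write $\int_{\alpha}^{\infty}=\int_{\alpha}^{t+1}+\int_{t+1}^{\infty}$; the split point $t+1$ is what produces the shift $t+1$ in $\tilde{B}(t)$. On $[\alpha,t+1]$ the numerator is at most $\log((t+1)/2\pi)$, so
$$
\int_{\alpha}^{t+1}\frac{\log(u/2\pi)\,du}{a^2+b^2(u-t)^2}<\log\frac{t+1}{2\pi}\cdot\frac{1}{ab}\left(\arctan\frac{b}{a}+\arctan\frac{b(t-\alpha)}{a}\right);
$$
now $\arctan(b(t-\alpha)/a)<\pi/2$ and, because $b>a$, the second inequality in Lemma \ref{arctan} gives $\arctan(b/a)<\pi/2-a/(2b)$, so this piece is $<(\pi/(ab)-1/(2b^2))\log((t+1)/2\pi)$. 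For the tail, drop $a^2$ from the denominator and substitute $w=u-t$:
$$
\int_{t+1}^{\infty}\frac{\log(u/2\pi)\,du}{a^2+b^2(u-t)^2}<\frac{1}{b^2}\int_{1}^{\infty}\frac{\log((w+t)/2\pi)\,dw}{w^2}=\frac{1}{b^2}\left(\log\frac{t+1}{2\pi}+\frac{\log(t+1)}{t}\right),
$$
the last equality being one integration by parts ($\int w^{-2}\log\frac{w+t}{2\pi}\,dw$, with boundary term $\log\frac{t+1}{2\pi}$ at $w=1$ and $\int_1^\infty dw/(w(w+t))=\log(1+t)/t$). Summing the two pieces collapses, by design, to precisely $\tilde{B}(t)$.

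\emph{Main obstacle.} The only delicate point is matching constants in the upper bound: the tail unavoidably contributes $\tfrac{1}{b^2}\log((t+1)/2\pi)$ on top of the would-be main term $\tfrac{\pi}{ab}\log((t+1)/2\pi)$, and one needs the sharp side of Lemma \ref{arctan} applied to $\arctan(b/a)$ — legitimate exactly because the hypothesis is $b>a$ — to shave the bounded part down from $\tfrac{\pi}{ab}$ to $\tfrac{\pi}{ab}-\tfrac{1}{2b^2}$, so that the sum equals $(\tfrac{\pi}{ab}+\tfrac{1}{2b^2})\log((t+1)/2\pi)+\log(t+1)/(b^2t)$. Everything else is routine, modulo keeping $t_0$ large enough for the positivity of the logarithms and for Lemma \ref{lem:lower_bound}, which is harmless since $t_0$ may be taken sufficiently large.
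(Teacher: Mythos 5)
Your proposal is correct and follows essentially the same route as the paper: split at $t$ for the lower bound and invoke Lemma \ref{lem:lower_bound} together with the arctangent estimates, split at $t+1$ for the upper bound, bound the numerator on the finite piece and drop $a^2$ in the tail, which is then evaluated by parts. The only cosmetic difference is that the paper applies the sharp bound of Lemma \ref{arctan} to both arctangents in the finite piece and discards the extra small term afterwards, whereas you use the sharp bound only on $\arctan(b/a)$ and the trivial bound $\pi/2$ on the other; both land on exactly $\tilde{B}(t)$.
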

\begin{proof}
For the lower bound, by elementary calculation and Lemmas \ref{lem:lower_bound} and \ref{arctan},  we obtain
\begin{align*}
&\int_{\alpha}^{\infty}\frac{\log(u/2\pi)\, du}{a^2+b^2(u-t)^2}
=\left(\int_{\alpha}^{t}+\int_{t}^{\infty}\right)\frac{\log(u/2\pi)\, du}{a^2+b^2(u-t)^2}\\
&>\int_{\alpha}^{t}\frac{\log(u/2\pi)\,d u}{a^2+b^2(u-t)^2}
+\log\left(\frac{t}{2\pi}\right)\int_{t}^{\infty}\frac{d u}{a^2+b^2(u-t)^2}\\
&>\frac{1}{ab}\log\left(\frac{t}{2\pi}\right)\arctan\left(
\frac{b(t-\alpha)}{a}\right)-\kappa
+\frac{\pi/2}{ab}\log\left(\frac{t}{2\pi}\right)\\
&>\frac{\pi}{ab}\log\left(\frac{t}{2\pi}\right)-\frac{\log\frac{t}{2\pi}}{b^2(t-\alpha)}-\kappa=\tilde{A}(t).
\end{align*}
By the same thoughts for the upper bound we get
\begin{align*}
&\int_{\alpha}^{\infty}\frac{\log(u/2\pi)\, du}{a^2+b^2(u-t)^2}
=\left(\int_{\alpha}^{t+1}+\int_{t+1}^{\infty} \right)\frac{\log(u/2\pi)\, du}{a^2+b^2(u-t)^2}\\
&<\log\left(\frac{t+1}{2\pi}\right)\int_{\alpha}^{t+1}\frac{du}{a^2+b^2(u-t)^2}
+\frac{1}{b^2}\int_{t+1}^{\infty}\frac{\log(u/2\pi)\, du}{(u-t)^2}\\
&=\frac{1}{ab}\log\left(\frac{t+1}{2\pi}\right)\left(\arctan\left(\frac{b}{a}\right)
+\arctan\left(\frac{t-\alpha}{a/b}\right)\right)
+\frac{\left(1+\frac{1}{t}\right)\log(t+1)-\log2\pi}{b^2}\\
&<\left(\frac{\pi}{ab}-\frac{t-\alpha+1}{2b^2(t-\alpha)}\right)\log\left(\frac{t+1}{2\pi}\right)
+\frac{\left(1+\frac{1}{t}\right)\log(t+1)-\log2\pi}{b^2}\\
&<\left(\frac{\pi}{ab}+\frac{1}{2b^2}\right)\log\frac{t+1}{2\pi}
+\frac{\log(t+1)}{b^2t}
=\tilde{B}(t).
\end{align*}
\end{proof}

\begin{lem}\label{int1}
Let $\alpha>0$ and $b>a\geqslant0$ be a constants. For $t>\alpha+a/b$, let
$$
\tilde{C}(t):=\frac{1}{4b^2 t}\log\left(\frac{t}{2\pi}\right)-\frac{\alpha}{b^2t^2}\log \left(\frac{\alpha}{2 \pi}\right)
$$
and
$$
\tilde{D}(t):=\frac{1}{2b^2 t}\log\left(\frac{2t^3}{4\pi^3}\right).
$$
Then
$$
\tilde{C}(t)<\int_{\alpha}^{\infty}\frac{\log(u/2\pi)\, du}{a^2+b^2(u+t)^2}<\tilde{D}(t).
$$
\end{lem}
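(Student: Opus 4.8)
The plan is to sandwich the quadratic denominator between $b^2(u+t)^2$ and $2b^2(u+t)^2$ and thereby reduce both inequalities to the \emph{exactly computable} model integral $\int_{\alpha}^{\infty}\log(u/2\pi)(u+t)^{-2}\, du$. Indeed, for $u\geqslant\alpha$ and $t>\alpha+a/b$ we have $u+t>a/b$, hence $a^2\leqslant b^2(u+t)^2$, and so
$$
b^2(u+t)^2\leqslant a^2+b^2(u+t)^2\leqslant 2b^2(u+t)^2.
$$
In the range relevant here $\alpha\geqslant 2\pi$, so $\log(u/2\pi)\geqslant 0$ on $[\alpha,\infty)$; dividing through, the displayed chain yields pointwise bounds for the integrand in opposite senses, the first usable for the lower bound and the third for the upper bound (for general $\alpha>0$ one would first discard the non-positive part of the integral over $[\alpha,2\pi)$).

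First I would evaluate the model integral by parts, taking $f=\log(u/2\pi)$ and $g'(u)=(u+t)^{-2}$, together with the partial-fraction identity $\int_{\alpha}^{\infty}\frac{du}{u(u+t)}=\frac1t\log\frac{\alpha+t}{\alpha}$, to get
$$
\int_{\alpha}^{\infty}\frac{\log(u/2\pi)\, du}{(u+t)^2}=\frac{\log(\alpha/2\pi)}{\alpha+t}+\frac1t\log\frac{\alpha+t}{\alpha}.
$$
For the upper bound, combine this with $a^2+b^2(u+t)^2\geqslant b^2(u+t)^2$ and then estimate $\frac{\log(\alpha/2\pi)}{\alpha+t}<\frac{\log(\alpha/2\pi)}{t}$ and $\log\frac{\alpha+t}{\alpha}<\log\frac{2t}{\alpha}$ (valid since $t>\alpha$); the two $\log(\alpha/2\pi)$ contributions cancel and one is left with $\int_{\alpha}^{\infty}\log(u/2\pi)(a^2+b^2(u+t)^2)^{-1}\, du<\frac1{b^2 t}\log\frac{t}{\pi}=\frac1{2b^2t}\log\frac{t^2}{\pi^2}$, which is $<\tilde{D}(t)$ because $\frac{t^2}{\pi^2}<\frac{2t^3}{4\pi^3}$ for $t>2\pi$.

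For the lower bound, combine the value of the model integral with $a^2+b^2(u+t)^2\leqslant 2b^2(u+t)^2$ to obtain the integral $\geqslant\frac1{2b^2}\big(\frac{\log(\alpha/2\pi)}{\alpha+t}+\frac1t\log\frac{\alpha+t}{\alpha}\big)$. Writing $\log\frac{\alpha+t}{\alpha}=\log\frac{t}{2\pi}+\log(1+\alpha/t)-\log\frac{\alpha}{2\pi}$ and collecting terms, the target inequality $\frac1{2b^2}(\cdots)>\tilde{C}(t)$ becomes, after multiplication by $4b^2 t$,
$$
\log\frac{t}{2\pi}+2\log\Big(1+\frac{\alpha}{t}\Big)+\frac{2\alpha(2\alpha+t)}{t(\alpha+t)}\log\frac{\alpha}{2\pi}>0,
$$
which is true summand by summand for $t>\alpha\geqslant 2\pi$ (the first summand being strictly positive). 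Here the role of the $t^{-2}$ correction $-\alpha b^{-2}t^{-2}\log(\alpha/2\pi)$ built into $\tilde{C}(t)$ is precisely to offset the loss incurred in passing from $\frac{\log(\alpha/2\pi)}{\alpha+t}$ to $\frac{\log(\alpha/2\pi)}{t}$.

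I do not expect a genuinely hard step: the whole argument is a chain of elementary estimates once the denominator is sandwiched as above. The one place demanding care is the bookkeeping in the lower bound — one must check that the correction term in $\tilde C(t)$ really does absorb the error at order $t^{-2}$, which it does with room to spare — and one should keep track of the standing assumption $\alpha\geqslant 2\pi$ (so that $\log(u/2\pi)\geqslant 0$ throughout the range of integration), which holds in every use of the lemma in this paper.
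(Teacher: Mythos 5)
Your argument is correct in the regime where the lemma is actually used ($\alpha=\gamma_1=14.13\ldots>2\pi$, hence $t>2\pi$), and it takes a genuinely different route from the paper's. The paper splits the integral at $u=t$, bounds $\log(u/2\pi)$ on each piece by its value at an endpoint, evaluates the remaining kernel integrals as arctangent differences, and then applies the explicit estimates $\tfrac{\pi}{2}-\tfrac1x<\arctan x<\tfrac{\pi}{2}-\tfrac{1}{2x}$ of Lemma \ref{arctan}. You instead leave $\log(u/2\pi)$ untouched, sandwich the denominator via $b^2(u+t)^2\leqslant a^2+b^2(u+t)^2\leqslant 2b^2(u+t)^2$ (legitimate since $u+t>a/b$), and evaluate the model integral $\int_\alpha^\infty\log(u/2\pi)(u+t)^{-2}\,du=\frac{\log(\alpha/2\pi)}{\alpha+t}+\frac1t\log\frac{\alpha+t}{\alpha}$ in closed form. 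I checked the algebra: the cancellation giving the upper bound $\frac{1}{b^2t}\log\frac{t}{\pi}<\tilde D(t)$ for $t>2\pi$ is right, and the lower bound does reduce, after multiplying by $4b^2t$, to $\log\frac{t}{2\pi}+2\log\left(1+\frac{\alpha}{t}\right)+\frac{2\alpha(2\alpha+t)}{t(\alpha+t)}\log\frac{\alpha}{2\pi}>0$, which holds term by term when $t>\alpha\geqslant2\pi$. Your route avoids Lemma \ref{arctan} altogether and is arguably cleaner; the paper's route keeps the same splitting template and bookkeeping as Lemma \ref{int}, which is why the two lemmas are proved in parallel there.

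One caution about generality. The lemma as stated assumes only $\alpha>0$ and $t>\alpha+a/b$, and your parenthetical fix for $\alpha<2\pi$ (discarding the part of the integral over $[\alpha,2\pi)$) only goes the right way for the upper bound: dropping a negative piece increases the integral, so it cannot be used for the lower bound. Moreover $t>2\pi$ is then no longer guaranteed by the hypotheses, and the literal statement can in fact fail there: for $a=0$, $b=1$, $\alpha=1$, $t=2$ one has $\int_1^\infty\log(u/2\pi)(u+2)^{-2}\,du\approx-0.063$ while $\tilde D(2)\approx-0.512$, so the claimed upper bound is violated. Thus your standing hypothesis $\alpha\geqslant2\pi$ (equivalently, positivity of $\log(\alpha/2\pi)$ and $\log(t/2\pi)$) is not cosmetic; but the paper's own proof makes the same implicit sign assumptions when it multiplies its arctangent bounds by $\log(\alpha/2\pi)$ and $\log(t/2\pi)$, and the only application is $\alpha=\gamma_1$ with $t$ large. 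So this is a defect of the lemma's stated generality rather than a gap specific to your argument; it would be worth stating the restriction explicitly rather than relegating it to a closing remark.
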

\begin{proof}
We do the same as in the proof of the previous lemma.
For the lower bound

\begin{align*}
&\int_{\alpha}^{\infty}\frac{\log(u/2\pi) \, du}{a^2+b^2(u+t)^2}=
\left(\int_{\alpha}^{t}+\int_{t}^{\infty}\right)\frac{\log(u/2\pi) \, du}{a^2+b^2(u+t)^2}\\
&>\frac{1}{ab} \log\left( \frac{\alpha}{2 \pi}\right) \left(\arctan\frac{2t}{a/b} - \arctan\frac{t+\alpha}{a/b}\right)+
\frac{1}{ab}\log \left(\frac{t}{2\pi}\right)\left( \frac{\pi}{2} - \arctan\frac{2t}{a/b} \right)\\
&>\frac{1}{ab}\log \left(\frac{\alpha}{2 \pi}\right)\left(\frac{\pi}{2}-\frac{a/b}{2t} - \frac{\pi}{2}+\frac{a/b}{2(t+\alpha)}\right)
+\frac{1}{ab}\log \left(\frac{t}{2\pi}\right)\left( \frac{\pi}{2} -\frac{\pi}{2}+\frac{a/b}{4t} \right)\\
&>\frac{1}{4b^2 t}\log\left(\frac{t}{2\pi}\right)-\frac{\alpha}{b^2t^2}\log \left(\frac{\alpha}{2 \pi}\right)
=\tilde{C}(t).
\end{align*}

And for the upper bound

\begin{align*}
&\int_{\alpha}^{\infty}\frac{\log(u/2\pi) \, du}{a^2+b^2(u+t)^2}=
\left(\int_{\alpha}^{t}+\int_{t}^{\infty}\right)\frac{\log(u/2\pi) \, du}{a^2+b^2(u+t)^2}\\
&<\log \left(\frac{t}{2 \pi}\right)\int_{\alpha}^{t}\frac{du}{a^2+b^2(u+t)^2}
+\int_{t}^{\infty}\frac{\log(u/2\pi) \, du}{b^2(u+t)^2}\\
&=\frac{1}{ab}\log \left(\frac{t}{2 \pi}\right)\left(\arctan\left(\frac{2t}{a/b}\right)-\arctan\left(\frac{t+\alpha}{a/b}\right)\right)
+\frac{1}{2b^2t}\log\left(\frac{2t}{\pi}\right)\\
&<\frac{1}{ab}\log \left(\frac{t}{2 \pi}\right)\left(\frac{\pi}{2} -\frac{a/b}{4t}-\frac{\pi}{2}+\frac{a/b}{t+\alpha}\right)+\frac{1}{2b^2t}\log\left(\frac{2t}{\pi}\right)\\
&<\frac{1}{2b^2 t}\log\left(\frac{2t}{\pi}\right)+\frac{1}{b^2t}\log\left(\frac{t}{2\pi}\right)=\tilde{D}(t).
\end{align*}
\end{proof}

The next lemma we need is well known as a summation by parts.
\begin{lem}\label{sumation by parts}
Let $\{a_n\}_{n=1}^{\infty}$ be a sequence of complex numbers and $G(u)$ a continuously differentiable function on $[1,x]$. If $A(u)=\sum_{n\leqslant u}a_n$,
then
$$
\sum_{n\leqslant x}a_n G(n)=A(x)G(x)-\int_{1}^{x}A(u)G'(u)\,du.
$$
\end{lem}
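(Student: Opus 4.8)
The plan is to reduce the claimed integral identity to the classical discrete partial summation (Abel's rearrangement) and then to recognise the resulting telescoping combination of values of $G$ as an integral of $G'$. Throughout I would adopt the convention $A(u)=0$ for $u<1$, so that $A$ is a step function which is constant on each interval $[n,n+1)$ and satisfies $a_n=A(n)-A(n-1)$ for every $n\geqslant 1$; in particular $A(0)=0$.

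First I would set $N=\lfloor x\rfloor$ and write $\sum_{n\leqslant x}a_nG(n)=\sum_{n=1}^{N}\bigl(A(n)-A(n-1)\bigr)G(n)$. Splitting the sum into two and shifting the summation index in the second piece turns this into $\sum_{n=1}^{N}A(n)G(n)-\sum_{n=0}^{N-1}A(n)G(n+1)$. Peeling off the term $A(N)G(N)$ from the first sum, discarding the term $A(0)G(1)=0$ from the second, and regrouping the remaining terms by their common factor $A(n)$ gives $\sum_{n\leqslant x}a_nG(n)=A(N)G(N)-\sum_{n=1}^{N-1}A(n)\bigl(G(n+1)-G(n)\bigr)$.

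Next I would use the hypothesis that $G$ is continuously differentiable, so that the fundamental theorem of calculus yields $G(n+1)-G(n)=\int_n^{n+1}G'(u)\,du$; since $A(u)=A(n)$ for $u\in[n,n+1)$, this lets me absorb $A(n)$ into the integrand, $A(n)\bigl(G(n+1)-G(n)\bigr)=\int_n^{n+1}A(u)G'(u)\,du$. Summing over $1\leqslant n\leqslant N-1$ merges the intervals $[n,n+1]$ into $[1,N]$, whence $\sum_{n\leqslant x}a_nG(n)=A(N)G(N)-\int_1^{N}A(u)G'(u)\,du$.

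Finally I would pass from the integer $N$ to the real number $x$. Because $N\leqslant x<N+1$, no new term is added to the partial sum, so $A(x)=A(N)$, and moreover $A(u)=A(N)$ for all $u\in[N,x]$, so $\int_N^{x}A(u)G'(u)\,du=A(N)\bigl(G(x)-G(N)\bigr)$. Adding and subtracting this quantity transforms $A(N)G(N)-\int_1^{N}A(u)G'(u)\,du$ into $A(x)G(x)-\int_1^{x}A(u)G'(u)\,du$, which is exactly the asserted formula. The only place that needs a little care is the bookkeeping at the non-integer endpoint $x$ together with the convention $A(0)=0$; once the step function $A$ is treated correctly everything else is routine, so I do not expect a genuine obstacle in this lemma.
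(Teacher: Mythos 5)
Your proof is correct: the discrete Abel rearrangement, the identification $A(n)\bigl(G(n+1)-G(n)\bigr)=\int_n^{n+1}A(u)G'(u)\,du$ on intervals where $A$ is constant, and the endpoint adjustment from $N=\lfloor x\rfloor$ to $x$ are all handled properly. The paper itself gives no argument here, only a citation to Murty and Apostol, and your write-up is essentially the standard textbook proof found in those references, so there is nothing to reconcile.
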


\begin{proof}
See for example Murty \cite[p. 18]{Murty} or Apostol \cite[p. 54]{Apostol} for the proof.
\end{proof}

\bigskip

In the below met inequalities numbers are rounded up to two or three decimal places. 

\begin{lem}\label{sumzero}
Let $\rho=\beta+i\gamma$ denote a non-trivial zero of $\zeta(s)$. Let $a,b>0$ and $\gamma_1:= 14.134725 \ldots$ $(\zeta(1/2+i\gamma_1)=0)$. If $t>\gamma_1$, then
\begin{align*}
&\sum_{\rho=\beta+i\gamma}\frac{1}{a^2+b^2(t-\gamma)^2}=
\sum_{\gamma>0}\frac{1}{a^2+b^2(t-\gamma)^2}+\sum_{\gamma>0}\frac{1}{a^2+b^2(t+\gamma)^2}=:S_1+S_2,
\end{align*}
where
\begin{align*}
\left|S_1-\frac{1}{2\pi}\int_{\gamma_1}^{\infty}\frac{\log(u/2\pi) \, d u}{a^2+ b^2(u-t)^2}\right|
<&\,\frac{0.22\log t+0.58\log\log t+4.58}{a^2}\\
&+\frac{0.166}{a^2t}\left(1+\frac{2.411a}{b}\right)
\end{align*}
and
\begin{align*}
\left|S_2-\frac{1}{2\pi}\int_{\gamma_1}^{\infty}\frac{\log(u/2\pi) \, du}{a^2+ b^2(u+t)^2}\right|<\frac{3.811}{a^2+b^2(\gamma_1+t)^2}+\frac{0.045}{ab}.
\end{align*}

\end{lem}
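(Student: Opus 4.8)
The plan is to treat $S_1$ and $S_2$ in parallel by partial summation against the zero-counting function $N(u)$, feeding in the Riemann--von Mangoldt estimate of Lemma~\ref{number of zeros}: the main term of that estimate will reproduce the two integrals in the statement, and its error term will produce the right-hand sides.

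The opening identity is immediate, since the non-trivial zeros of $\zeta$ occur in complex-conjugate pairs and none lies on the real segment $(0,1)$: listing the positive ordinates $\gamma$ with multiplicity, the zeros of positive ordinate contribute $\sum_{\gamma>0}(a^2+b^2(t-\gamma)^2)^{-1}=S_1$ to $\sum_\rho(a^2+b^2(t-\gamma)^2)^{-1}$ and their conjugates contribute $\sum_{\gamma>0}(a^2+b^2(t+\gamma)^2)^{-1}=S_2$, the summand not seeing $\beta$. For $S_1$ I would set $G(u)=1/(a^2+b^2(u-t)^2)$ (so $G(u)\to0$) and apply partial summation (Lemma~\ref{sumation by parts}, in its Riemann--Stieltjes form, with $N$ as partial-sum function); since $N\equiv0$ below $\gamma_1$ and $N(x)G(x)\to0$, this gives $S_1=-\int_{\gamma_1}^{\infty}N(u)G'(u)\,du$. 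Writing $N(u)=\tfrac{u}{2\pi}\log\tfrac{u}{2\pi e}+\tfrac{7}{8}+E(u)$ with $|E(u)|\le\mathcal E(u):=\mathcal E_0(u)+\tfrac{25}{48\pi u}$ and $\mathcal E_0(u)=0.110\log u+0.290\log\log u+2.290$ by Lemma~\ref{number of zeros}, and using $\tfrac{d}{du}\big(\tfrac{u}{2\pi}\log\tfrac{u}{2\pi e}\big)=\tfrac{1}{2\pi}\log\tfrac{u}{2\pi}$, integrating the two explicit terms of $N$ back by parts returns exactly $\tfrac{1}{2\pi}\int_{\gamma_1}^{\infty}\log(u/2\pi)G(u)\,du$ together with the boundary term $\big(\tfrac{\gamma_1}{2\pi}\log\tfrac{\gamma_1}{2\pi e}+\tfrac{7}{8}\big)G(\gamma_1)$ (coefficient $\approx0.45$), so that
$$S_1-\frac{1}{2\pi}\int_{\gamma_1}^{\infty}\frac{\log(u/2\pi)\,du}{a^2+b^2(u-t)^2}=\Big(\tfrac{\gamma_1}{2\pi}\log\tfrac{\gamma_1}{2\pi e}+\tfrac{7}{8}\Big)G(\gamma_1)-\int_{\gamma_1}^{\infty}E(u)G'(u)\,du .$$

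The core of the proof is bounding the last integral. Since $G'>0$ on $(\gamma_1,t)$ and $G'<0$ on $(t,\infty)$ I would split there. On $(\gamma_1,t)$ the increasing part gives $\int_{\gamma_1}^t\mathcal E_0(u)G'(u)\,du\le\mathcal E_0(t)\,(G(t)-G(\gamma_1))$, and the tail $\int_{\gamma_1}^t\tfrac{25}{48\pi u}G'(u)\,du$ is handled by one more integration by parts, producing $\tfrac{25}{48\pi t}G(t)$ plus an $O(1/a^2)$ constant and a negative multiple of $G(\gamma_1)$. On $(t,\infty)$ the bound increases while $G$ decreases, so I would integrate by parts once more, $\int_t^{\infty}\mathcal E(u)(-G'(u))\,du=\mathcal E(t)G(t)+\int_t^{\infty}\mathcal E'(u)G(u)\,du$, and use that $\mathcal E'$ is decreasing together with $\int_t^{\infty}G(u)\,du=\pi/(2ab)$. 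Summing the two halves gives $2\mathcal E_0(t)/a^2=\big(0.22\log t+0.58\log\log t+4.58\big)/a^2$ for the leading part, and the remaining pieces assemble into the asserted $\tfrac{0.166}{a^2t}(1+\tfrac{2.411a}{b})$ once the nonpositive $G(\gamma_1)$-terms are absorbed — the boundary term $\approx0.45\,G(\gamma_1)$ being dominated by the $-\mathcal E_0(t)G(\gamma_1)$ left over from the first split since $\mathcal E_0(t)\ge2.29$. The precise constants, and the monotonicity claims down to $t>\gamma_1$, are to be checked with Mathematica.

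For $S_2$ the same scheme applies with $H(u)=1/(a^2+b^2(u+t)^2)$, which is monotonically decreasing on $(\gamma_1,\infty)$, so no interior split is needed: $S_2=-\int_{\gamma_1}^{\infty}N(u)H'(u)\,du$, the explicit part of $N$ returns $\tfrac{1}{2\pi}\int_{\gamma_1}^{\infty}\log(u/2\pi)H(u)\,du$ plus the boundary term at $\gamma_1$, and $\big|\int_{\gamma_1}^{\infty}E(u)H'(u)\,du\big|\le\int_{\gamma_1}^{\infty}\mathcal E(u)(-H'(u))\,du=\mathcal E(\gamma_1)H(\gamma_1)+\int_{\gamma_1}^{\infty}\mathcal E'(u)H(u)\,du$ after one integration by parts. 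Collecting, $\big(\tfrac{\gamma_1}{2\pi}\log\tfrac{\gamma_1}{2\pi e}+\tfrac{7}{8}+\mathcal E(\gamma_1)\big)H(\gamma_1)$ is bounded by $3.811/(a^2+b^2(\gamma_1+t)^2)$ and $\int_{\gamma_1}^{\infty}\mathcal E'(u)H(u)\,du\le\mathcal E'(\gamma_1)\,\pi/(2ab)<0.045/(ab)$, the two asserted terms. The one genuinely delicate point throughout is that the error bound of Lemma~\ref{number of zeros} must be integrated against a kernel whose derivative changes sign, and the target constants $4.58$, $0.166$, $2.411$ (and $3.811$, $0.045$) leave almost no slack, so the splitting at $u=t$, the two auxiliary integrations by parts, and the handling of the boundary value at $\gamma_1$ must be arranged without waste; the resulting numerical inequalities are then verified on a computer.
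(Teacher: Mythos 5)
Your proposal follows essentially the same route as the paper's proof: conjugate symmetry for the splitting into $S_1+S_2$, partial summation against $N(u)$, the explicit Riemann--von Mangoldt main term of Lemma \ref{number of zeros} integrated back to recover the two integrals plus a boundary term at $\gamma_1$, a split at $u=t$ where the kernel's derivative changes sign for $S_1$ (no split for the monotone kernel of $S_2$), and the error envelope bounded via $f(t)=1/a^2$ and $\int_t^\infty f=\pi/(2ab)$, with the final constants left to numerical verification. The only differences are cosmetic (you write $N=\text{main}+E$ with $|E|\le\mathcal E$ and keep the $7/8$ in the boundary term, whereas the paper uses continuous envelopes $N_{\mathrm{up}},N_{\mathrm{low}}$ with $7/8$ folded into the constants $3.165$ and $1.415$), so the argument is correct and matches the paper.
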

\begin{proof}
Since $\zeta(\rho)=\zeta(\bar{\rho})=0$ we have that
$$
\sum_{\rho=\beta+i\gamma}\frac{1}{a^2+b^2(t-\gamma)^2}=\sum_{\gamma>0}\frac{1}{a^2+b^2(t-\gamma)^2}+\sum_{\gamma>0}\frac{1}{a^2+b^2(t+\gamma)^2}=S_1+S_2.
$$
For $S_1$, by Lemma \ref{sumation by parts},
\begin{align*}
S_1=-\int_{\gamma_1}^{\infty}N(u)f'(u)du,
\end{align*}
where $f(u):=1/(a^2+b^2(t-u)^2)$ and the step function $N(u)$ is defined in Lemma \ref{number of zeros}. Let $N_{up}(u)$ and $N_{low}(u)$ be the corresponding continues upper and lower bounds of $N(u)$. From Lemma \ref{number of zeros}
\begin{align*}
&N_{up}(u)=\frac{u}{2\pi}\log\frac{u}{2\pi e}+0.11\log u+0.29\log \log u+3.165+\frac{25}{48\pi u},\\
&N_{low}(u)=\frac{u}{2\pi}\log\frac{u}{2\pi e}-0.11\log u-0.29\log \log u-1.415-\frac{25}{48\pi u}.
\end{align*}

Let us observe that $u$ derivative $f'(u)$ is non-negative for $u\leqslant t$ and $f'(u)$ is negative for $u>t$. As $N_{up}(u), \, N_{low}(u)$ are continues functions, then

\begin{align}\label{I_1}
\nonumber
&S_1\leqslant-\int_{\gamma_1}^{t}N_{low}(u)f'(u)du-\int_{t}^{\infty}N_{up}(u)f'(u)du
=-\int_{\gamma_1}^{\infty}\frac{u}{2\pi}\log\frac{u}{2\pi e}f'(u)du\\
\nonumber
&+\int_{\gamma_1}^{t}\left(0.11\log u+0.29\log\log u+1.415+\frac{25}{48\pi u}\right)d\,f(u)\\
\nonumber
&-\int_{t}^{\infty}\left(0.11\log u+0.29\log\log u+3.165+\frac{25}{48\pi u}\right)d\,f(u)
\\
\nonumber
&\leqslant\frac{1}{2\pi}\int_{\gamma_1}^{\infty}\frac{\log(u/2\pi) \, du}{a^2+b^2(u-t)^2}+\frac{\gamma_1}{2\pi}\log\left(\frac{\gamma_1}{2\pi e}\right)f(\gamma_1)\\
\nonumber
&+(f(t)-f(\gamma_1))\left(0.11\log t+0.29\log\log t+1.415+\frac{25}{48\pi \gamma_1}\right)\\
&+f(t)\left(3.165+\frac{25}{48\pi t}\right)-0.11\int_{t}^{\infty}\log u \,d\,f(u)
-0.29\int_{t}^{\infty}\log \log u\, d\,f(u).
\end{align}
For the integrals in (\ref{I_1}) it holds that
\begin{align*}
&-\int_{t}^{\infty}\log u\, d\,f(u)=f(t)\log t+\int_{t}^{\infty}\frac{f(u)du}{u}<f(t)\log t+\frac{\pi/2}{ab}\cdot\frac{1}{t},\\
&-\int_{t}^{\infty}\log \log u \, d\,f(u)
<f(t)\log \log t+\frac{\pi/2}{ab}\cdot\frac{1}{t\log t}.\\
\end{align*}

Therefore
\begin{align*}
S_1&<\frac{1}{2\pi}\int_{\gamma_1}^{\infty}\frac{\log(u/2\pi) \, d u}{a^2+b^2(u-t)^2}+\frac{0.220\log t+0.580\log\log t+4.580}{a^2}
\\
&+\frac{0.166}{a^2t}\left(1+\frac{2.413a}{b}\right).
\end{align*}

By the similar arguments, the lower bound of $S_1$ is
\begin{align*}
S_1&>\frac{1}{2\pi}\int_{\gamma_1}^{\infty}\frac{\log(u/2\pi) \, du}{a^2+b^2(u-t)^2}-\frac{0.220\log t+0.580\log\log t+4.580}{a^2}
\\
&-\frac{0.166}{a^2t}\left(1+\frac{2.413a}{b}\right).
\end{align*}

The upper bound of
$$
S_2=-\int_{\gamma_1}^{\infty}N(u)g'(u)du,\,g(u):=1/(a^2+b^2(t+u)^2),
$$
observing that $g(u)$ is decreasing for $u\geqslant0$, is
\begin{align}
\nonumber
S_2<&-\int_{\gamma_1}^{\infty}N_{up}(u)g'(u)\,du=-\int_{\gamma_1}^{\infty}\frac{u}{2\pi}\log\frac{u}{2\pi e}\,d\,g(u)\\
\nonumber
&-\int_{\gamma_1}^{\infty}\left(0.11\log u+0.29\log\log t+3.165+\frac{25}{48\pi u}\right)g'(u)\, d u\\
\nonumber
&=\frac{1}{2\pi}\int_{\gamma_1}^{\infty}\frac{\log \left(u/2\pi\right)\,du}{a^2+b^2(u+t)^2}+
\frac{\gamma_1}{2\pi}\log\left(\frac{\gamma_1}{2\pi e}\right)g(\gamma_1)\\
&-0.11\int_{\gamma_1}^{\infty}\log u \,g'(u)\,d u
-0.29\int_{\gamma_1}^{\infty}\log \log u\, g'(u)\,d u
\label{Int_1}
\\
&-\int_{\gamma_1}^{\infty}\left(3.165+\frac{25}{48\pi u}\right)g'(u)\,du.
\label{Int_2}
\end{align}

The integrals in (\ref{Int_1}) and (\ref{Int_2}) evaluate to 
\begin{align*}
&-\int_{\gamma_1}^{\infty}\log u\, g'(u)\,du=g(\gamma_1)\log \gamma_1+\int_{\gamma_1}^{\infty}\frac{du}{u(a^2+b^2(t+u)^2)}
<g(\gamma_1)\log \gamma_1+\frac{\pi/2}{\gamma_1ab},\\
&-\int_{\gamma_1}^{\infty}\log \log u\, g'(u)\, d u<g(\gamma_1)\log \gamma_1+\frac{\pi/2}{\gamma_1ab},\\
&-\int_{\gamma_1}^{\infty}\left(3.165+\frac{25}{48\pi u}\right)g'(u)\,du<g(\gamma_1)\left(3.165+\frac{25}{48\pi\gamma_1}\right).
\end{align*}

Therefore 
$$
S_2<\frac{1}{2\pi}\int_{\gamma_1}^{\infty}\frac{\log \left(u/2\pi\right)\,du}{a^2+b^2(u+t)^2}+3.811 g(\gamma_1)+\frac{0.045}{ab}.
$$

Arguing the same, the lower bound of $S_2$ is
$$
S_2>\frac{1}{2\pi}\int_{\gamma_1}^{\infty}\frac{\log \left(u/2\pi\right)\,du}{a^2+b^2(u+t)^2}-3.811g(\gamma_1)-\frac{0.045}{ab}.
$$
The proof follows by collecting the upper and lower bounds of $S_1$ and $S_2$.
\end{proof}

\section{Proof of Theorem \ref{Main thm}}\label{proof}
In this section we prove the Theorem \ref{Main thm}.

\begin{proof}[Proof of Theorem \ref{Main thm}]
Let $1/2<\sigma<1$. Since $0<(\sigma-1/2)^2<1/4$, we have that

\begin{align}\label{1}
\sum_{\rho=1/2+i\gamma}\frac{\sigma-1/2}{1/4+(t-\gamma)^2}
<\Re\sum_{\rho=1/2+i\gamma}\frac{1}{s-\rho}<
\sum_{\rho=1/2+i\gamma}\frac{(\sigma-1/2)^{-1}}{1+4(t-\gamma)^2}.
\end{align}

Recall that $c$ denotes the part of zeros of $\zeta(s)$ on the line $1/2+it$. Then, the total quantity $N(T)$ of non-trivial zeros
of $\zeta(s)$ in the rectangle $0<\sigma<1,\, 0<t<T$ can be expressed as $N(T)=cN(T)+(1-c)N(T)$.
Then,
by Lemma \ref{sumzero} and Lemma \ref{number of zeros} with $cN(T)$

\begin{align}\label{2}
&\Re\sum_{\rho=1/2+i\gamma}\frac{1}{s-\rho}=\sum_{\rho=1/2+i\gamma}\frac{\sigma-1/2}{(\sigma-1/2)^2+(t-\gamma)^2}\\ \nonumber
&=\frac{c(\sigma-1/2)}{2\pi}\int_{\gamma_1}^{\infty}
\left(\frac{\log(u/2\pi)}{(\sigma-1/2)^2+(u-t)^2}+\frac{\log(u/2\pi)}{(\sigma-1/2)^2+(u+t)^2}\right)du\\
&+c(\sigma-1/2)M(t),\nonumber
\end{align}
where $M(t)=O(\log t)$ as $t\to\infty$ and the explicit lower and upper bounds of $M(t)$ for $t>14.134725\ldots$ are given in Lemma \ref{sumzero}.

    Combining (\ref{1}) and (\ref{2}) and applying Lemmas \ref{int}, \ref{int1} and \ref{sumzero} with $a=1/2,\, b=1$ and $\alpha=\gamma_1=14.134725 \ldots$ for the lower bound we get
\begin{align*}
&\Re\sum_{\rho=1/2+i\gamma}\frac{1}{s-\rho}
>\frac{c(\sigma-1/2)}{2\pi}\int_{\gamma_1}^{\infty}
\left(\frac{\log(u/2\pi)}{1/4+(u-t)^2}+\frac{\log(u/2\pi)}{1/4+(u+t)^2}\right)du\\
&+c(\sigma-1/2)\left(-0.88\log t-2.32\log\log t-18.41-\frac{1.465}{t}-\frac{3.811}{0.25+(\gamma_1+t)^2}\right)\\
&>c(\sigma-1/2)\left(0.12\log\frac{t}{2\pi}-2.32\log\log t
-18.432-\epsilon_1(t)\right),\\
\end{align*}
where 
\begin{align}\label{eq:eps1}
\epsilon_1(t)=\left(\frac{1}{8\pi t}-\frac{1}{2\pi(t-\gamma_1)}\right)
\log\frac{t}{2\pi}-\frac{1.465}{t}-\frac{\gamma_1\log \frac{\gamma_1}{(2\pi)}}{2\pi t^2}-\frac{3.811}{0.25+(\gamma_1+t)^2}.
\end{align}
We check with Mathematica \cite{Mathematica} that 
\begin{align*}
0.12\log\frac{t}{2\pi}-2.32\log\log t-18.432\geqslant 49\times10^{-6},
\,|\epsilon_1(t)|\leqslant1.65\times 10^{-113},
\end{align*}
when $t\geqslant1.984\times10^{114}$.

By the same arguments, with $a=1$ and $b=2$, for the upper bound we get
\begin{align*}
&\Re\sum_{\rho=1/2+i\gamma}\frac{1}{s-\rho}
<\frac{c(\sigma-1/2)}{2\pi}\int_{\gamma_1}^{\infty}
\left(\frac{\log(u/2\pi)}{1+4(u-t)^2}+\frac{\log(u/2\pi)}{1+4(u+t)^2}\right)du
\end{align*}
\begin{align*}
&+c(\sigma-1/2)\left(0.22\log t +0.58\log\log t+4.603+\frac{0.367}{t}+\frac{3.811}{1+4(\gamma_1+t)^2}\right)\\
&<c(\sigma-1/2)\left(0.49\log\frac{t}{2\pi}+0.58\log\log t+4.603+\epsilon_2(t)\right),\\
\end{align*}
where
\begin{align}\label{eq:eps2}
\epsilon_2(t)=\frac{0.637}{t}+\frac{3.811}{1+4(t+\gamma_1)^2}+\frac{\log(t+1)+\frac{1}{2}\log\frac{2t^3}{4\pi^3}}{8\pi t}.
\end{align}
\end{proof}

\section{Can $\Re\frac{\xi'}{\xi}(s)$ remain positive if there are zeros off the critical line?}\label{Can?}

In this section we assume that the Riemann hypothesis fails by three different scenarios:\\
I. - there is only one zero in the region $1/2<\sigma<1$, $t>0$,\\
II. - there is a finite number $n\geqslant2$ of zeros off the critical line,\\
III. - there are infinitely many of zeros off the critical line.

\bigskip

     I. Assume that there is one point $\tilde{\beta}+i\tilde{\gamma}$ such that $\zeta(\tilde{\beta}+i\tilde{\gamma})=0$ for $1/2<\tilde{\beta}<1$, $t>0$.
Then, by Theorem \ref{Main thm} with $c=1$ and estimation,
\begin{align*}
&\Re\frac{\xi'}{\xi}(s)=
\left(\sigma-\frac{1}{2}\right)\sum_{\rho=1/2+i\gamma}\frac{1}{(\sigma-1/2)^2+(t-\gamma)^2}\\ \nonumber
&+\frac{\sigma-\tilde{\beta}}{(\sigma-\tilde{\beta})^2+(t-\tilde{\gamma})^2}
+\frac{\sigma-\tilde{\beta}}{(\sigma-\tilde{\beta})^2+(t+\tilde{\gamma})^2}\\ \nonumber
&+\frac{\sigma-(1-\tilde{\beta})}{(\sigma-(1-\tilde{\beta}))^2+(t-\tilde{\gamma})^2}
+\frac{\sigma-(1-\tilde{\beta})}{(\sigma-(1-\tilde{\beta}))^2+(t+\tilde{\gamma})^2}\\
&>0.11\left(\sigma-\frac{1}{2}\right)\log\frac{t}{2\pi}+\frac{\sigma-\tilde{\beta}}{(\sigma-\tilde{\beta})^2+(t-\tilde{\gamma})^2}
+O\left(\frac{\log \log t}{\log t}\right)>0
\end{align*}
if
\begin{equation}\label{ineq}
(\sigma,t)\in\left\{\frac{\sigma-\tilde{\beta}}{(\sigma-\tilde{\beta})^2+(t-\tilde{\gamma})^2}>-0.11\left(\sigma-\frac{1}{2}\right)
\log\frac{t}{2\pi}\right\}
\end{equation}
and $t$ is sufficiently large that $\log\log t/\log t $ is negligible. The region of $(\sigma,t)$ given by (\ref{ineq}) might have the following gray view given in Figure 1 below.
 Figure 1 was obtained by Mathematica \cite{Mathematica} with some chosen point $\tilde{\beta}+i\tilde{\gamma}$.
\begin{figure}[h]
    \centering
   % \caption*{A note}
    \begin{minipage}{0.45\textwidth}
        \centering
        \includegraphics[width=0.9\textwidth]{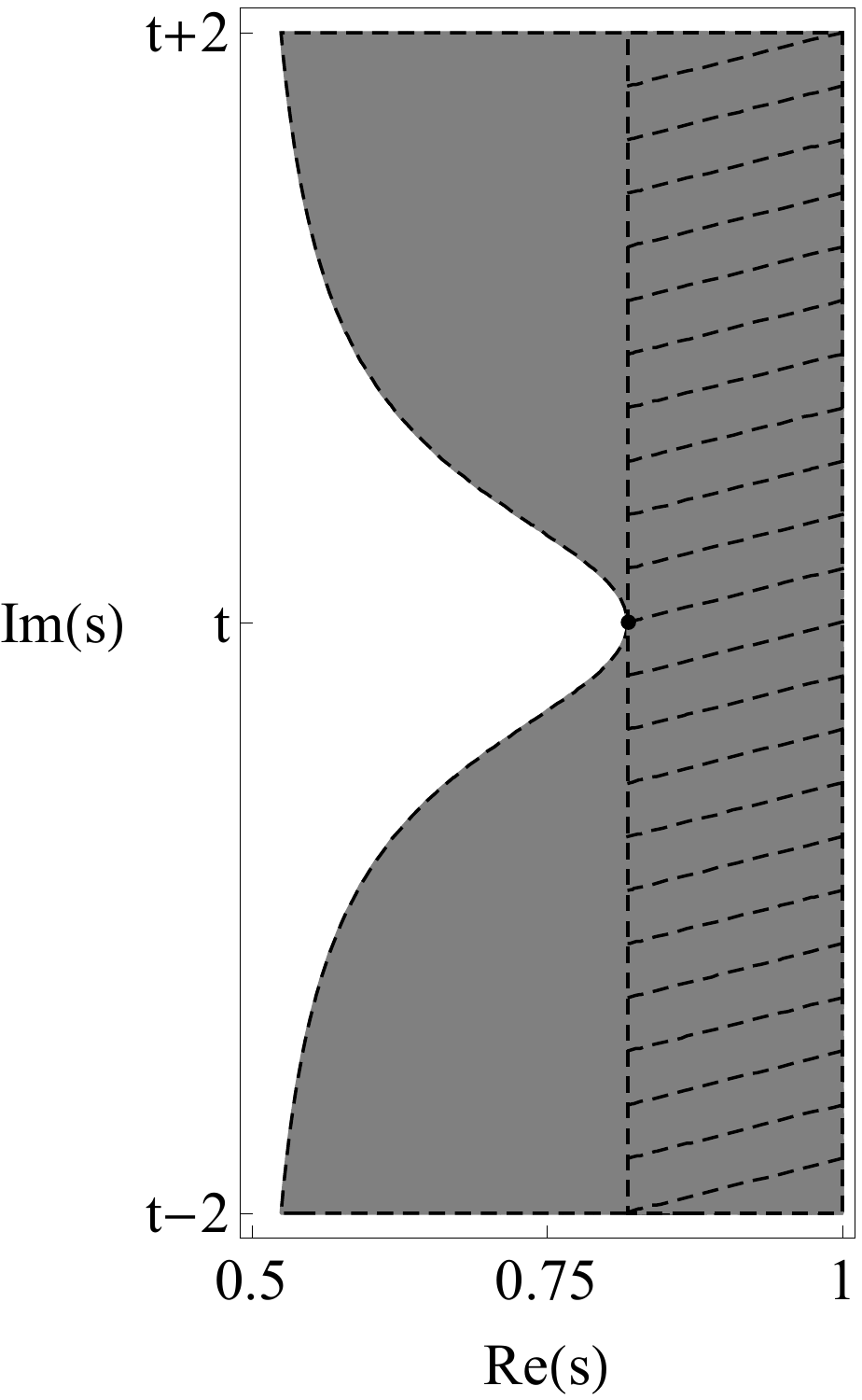} % first figure itself
        \caption{Whole gray region satisfies inequality (\ref{ineq}). Theorem \ref{S-D} or \ref{MSZ} gives a dashed gray strip, where $\Re\xi'/\xi(s)>0$.}
    \end{minipage}\hfill
    \begin{minipage}{0.45\textwidth}
        \centering
        \includegraphics[width=0.9\textwidth]{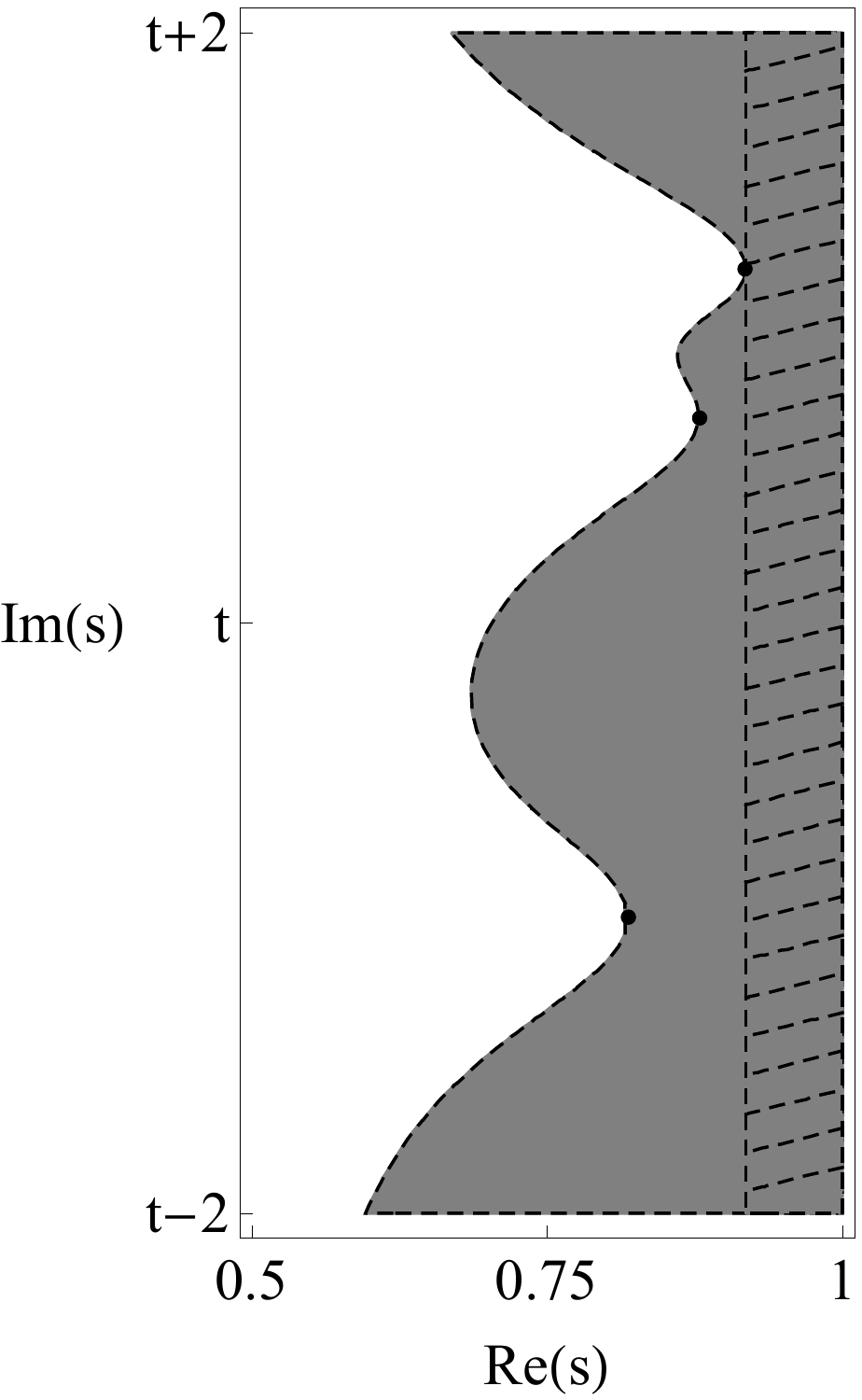} % second figure itself
        \caption{Whole gray region satisfies inequality (\ref{ineq1}). Theorem \ref{S-D} or \ref{MSZ} gives a dashed gray strip, where $\Re\xi'/\xi(s)>0$.}
    \end{minipage}
\end{figure}

\bigskip

II. Assume that there is a finite number $n\geqslant2$ of points $\tilde{\beta}_k+i\tilde{\gamma}_k$, $k=1,\,\ldots,\,n$ such that $\zeta(\tilde{\beta}_k+i\tilde{\gamma}_k)=0$ for $1/2<\tilde{\beta}_k<1$, $t>0$.
Then, by Theorem \ref{Main thm} with $c=1$ and previous means,
\begin{align*}
&\Re\frac{\xi'}{\xi}(s)
>0.11\left(\sigma-\frac{1}{2}\right)\log\frac{t}{2\pi}
+\sum_{k=1}^{n}\frac{\sigma-\tilde{\beta}_k}{(\sigma-\tilde{\beta}_k)^2+(t-\tilde{\gamma}_k)^2}
+O\left(\frac{\log \log t}{\log t}\right)>0
\end{align*}
if
\begin{equation}\label{ineq1}
(\sigma,t)\in \left\{\sum_{k=1}^{n}\frac{\sigma-\tilde{\beta}_k}{(\sigma-\tilde{\beta}_k)^2+(t-\tilde{\gamma}_k)^2}
>-0.11\left(\sigma-\frac{1}{2}\right)\log\frac{t}{2\pi}\right\}
\end{equation}
and $t$ is sufficiently large that $\log\log t/\log t$ is negligible. The region of $(\sigma,t)$ given by (\ref{ineq1}) might have the following gray view given in Figure 2 above.
Figure 2 was obtained by Mathematica \cite{Mathematica} too with some chosen $\tilde{\beta}_k$ and $\tilde{\gamma}_k$, where the black points are $\tilde{\beta}_k+i\tilde{\gamma}_k$.

\bigskip

    III. Assume that there are infinitely many points $\tilde{\beta}_k+i\tilde{\gamma}_k$, such that $\zeta(\tilde{\beta}_k+i\tilde{\gamma}_k)=0$ for $1/2<\tilde{\beta}_k<1$, $t>0$. 
    
%    First we note that (see for example Edwards \cite[p. %67]{Edwards})
%$$
%\sum_{\rho}\frac{1}{\rho}=\frac{\gamma_0}{2}+\frac{1}{2}\log \pi +1 %- \log 2\pi=0.023\ldots:=\mu,
%$$
%%where the summation is taken by all non-trivial zeros of %$\zeta(s)$ in conjugate pairs and in order of increasing
%imaginary parts, and $\gamma_0=0.577\ldots$ denotes the Euler's %constant. Since
%$$
%\Re \sum_{\rho}\frac{1}{\rho}
%=\Re \left(\sum_{\rho=1/2+i\gamma}\frac{1}{1/2+i\gamma}+\sum_{\tild%e{\rho}=\tilde{\beta}_k+i\tilde{\gamma}_k}\frac{1}{\tilde{\beta}_k+%i\tilde{\gamma}_k}\right)=\mu,
%$$
%then
%$$
%\frac{1}{\mu}\sum_{\tilde{\rho}=\tilde{\beta}_k+i\tilde{\gamma}_k}\%frac{2\tilde{\beta}_k}{\tilde{\beta}^2_k+\tilde{\gamma}^2_k}\leqsla%nt1
%$$
%and
Then, by the same arguments as in I. and II.,
\begin{align}\label{inf}
&\Re\frac{\xi'}{\xi}(s)>c\cdot 0.11\left(\sigma-\frac{1}{2}\right)
\log \frac{t}{2\pi} \nonumber \\
&+\sum_{\substack{\tilde{\rho}=\tilde{\beta}_k+i\tilde{\gamma}_k\\\tilde{\gamma}_k>0}}\frac{\sigma-\tilde{\beta}_k}{(\sigma-\tilde{\beta}_k)^2+(t-\tilde{\gamma}_k)^2}
-\sum_{\tilde{\gamma}_k>0}\frac{1/2}{(t+\tilde{\gamma}_k)^2}
+O\left(\frac{\log \log t}{\log t}\right)>0
\end{align}
if
\begin{equation*}\label{ineq2}
(\sigma,t)\in \left\{
\sum_{\substack{\tilde{\rho}=\tilde{\beta}_k+i\tilde{\gamma}_k\\\tilde{\gamma}_k>0}}\frac{(\sigma-\tilde{\beta}_k)}{(\sigma-\tilde{\beta}_k)^2+(t-\tilde{\gamma}_k)^2}
>-c\cdot0.11\left(\sigma-\frac{1}{2}\right)\log\frac{t}{2\pi}
\right\}
\end{equation*}
and $t$ is sufficiently large. We note that
$\sum_{\tilde{\gamma}_k>0}\frac{1/2}{(t+\tilde{\gamma}_k)^2}=O\left(\frac{\log t}{t}\right),\,t\to\infty$ in (\ref{inf}), see Lemmas \ref{int1} and \ref{sumzero}.

%\par\bigskip

%\tiny

%\smallskip

%\noindent
%Andrius Grigutis\\
%Institute of Mathematics\\
%Faculty of Mathematics and Informatics, Vilnius University\\
%Naugarduko 24, LT-03225 Vilnius, Lithuania\\
%andrius.grigutis@mif.vu.lt


\begin{thebibliography}{20}

\bibitem{Apostol}{\sc Tom M. Apostol}, {\it Introduction to Analytic Number Theory}, Springer, 1998.

\bibitem{Broughan} {\sc K. Broughan}, {\it Extension of the Riemann $\xi$-Function's Logarithmic Derivative Positivity Region to Near the Critical Strip}, Canad. Math. Bull., 52(2), 186--194, 2009, doi:10.4153/CMB-2009-021-3

\bibitem{Conrey}{\sc J.B. Conrey}, {\it More than two fifths of the zeros of the Riemann zeta function are on the critical line}, J. Reine Angew. Math., 399 (1989) 1-26.

\bibitem{Edwards}{\sc H.M. Edwards}, {\it Riemann's Zeta Function}, Academic Press, New York
(1974). Reprinted by Dover Publications, Mineola, N.Y. (2001).

\bibitem{Feng}{\sc S. Feng}, {\it Zeros of the Riemann zeta function on the critical line}, J. Number Theory, 132(4), 2012, 511-542.

%\bibitem{GG}{\sc R. Garunk\v{s}tis, A. Grigutis}, The Size of the Selberg Zeta-Function at Places Symmetric with Respect to the Line Re(s)=1/2, Results. Math. (2015). doi:10.1007/s00025-015-0486-7

\bibitem{RG}{\sc R. Garunk\v{s}tis}, {\it On a positivity property of the Riemann $\xi$-function}, Liet. matem. rink. 42 (2002), 179--184.

\bibitem{Hardy}{\sc G. H. Hardy}, {\it Sur les z\'{e}ros de la fonction $\zeta(s)$}. Comptes Rendus 158(1914) 1012--1014.

\bibitem{Has et al.}{\sc E. Hasanalizade, Q. Shen, P. J. Wong},
{\it Counting zeros of the Riemann zeta function}, J. Number Theory, 2021.

\bibitem{Hinkkanen}{\sc A. Hinkkanen}, {\it On functions of bounded type}, Complex Variables Theory Appl.
34 (1997), 119-139.

%\bibitem{Sarnak}{\sc N. M. Katz and P. Sarnak}, {\it Zeros of zeta functions and symmetry}, Bull. Amer.
%Math. Soc. 36 (1999), 1�-26.

\bibitem{Lagarias}{\sc J. C. Lagarias}, {\it On a positivity property of the Riemann $\xi$ function}, Acta Arith. 89 (1999), 217--234.

\bibitem{Levinson}{\sc N. Levinson}, {\it More than one third of the zeros of Riemann's zeta-function are on $\sigma = 1/2$}, Adv. Math., 13 (4) (1974), 383-436.

\bibitem{Vigirdas}{\sc V. Mackevičius}, {\it Integralas ir matas}, TEV, 1998, ISBN 9986-546-47-8

\bibitem{Mathematica} Mathematica (Version 9.0), Wolfram Research, Inc., Champaign, Illinois, 2012

\bibitem{MSZ} {\sc Yu. Matiyasevich, F. Saidak, P. Zvengrowski},
{\it Horizontal monotonicity of the modulus of the zeta function, L-functions, and related functions
}, Acta Arith., 166.2 (2014), 189--200.

\bibitem{Murty}{\sc M. R. Murty}, {\it Problems in Analytic Number Theory}, Grad. Texts in Math. 206,
Springer, New York, 2001.

%\bibitem{Na-Ya} {\sc S. Nazardonyavi and S. Yakubovich}, {\it Another proof of Spira's inequality and its applications to the
%Riemann Hypothesis}, J. Math. Inequal., 7 (2013), 167--174.

%\bibitem{Saidak, Zvengrowski}{\sc F. Saidak, P. Zvengrowski}, {\it On the modulus of the Riemann zeta function in the critical strip}, Math. Slovaca,
%{\bf 53} (2003), 145--172.

%xxxx
\bibitem{Platt_Trudgian}{\sc D. J. Platt, T. S. Trudgian}, {\it
An improved explicit bound on $|\zeta(1/2+it)|$},
J. Number Theory, 147, 2015, 842--851,

\bibitem{Pratt et al.} {\sc K. Pratt, N. Robles, A. Zaharescu et al.}, {\it More than five-twelfths of the zeros of $\zeta$ are on the critical line}, Res. Math. Sci. 7(2), (2020).

\bibitem{Selberg}{\sc A. Selberg}, {\it On the zeros of Riemann's zeta-function}, Skr. Norske Vid. Akad. Oslo I., 10 (1942), 59 p.

\bibitem{Sondow}{\sc J. Sondow, C. Dumitrescu}, {\it A monotonicity property of Riemann's xi function
and a reformulation of the Riemann hypothesis}, Period. Math. Hungar. 60 (2010), 37--40.

\bibitem{Spivak} {\sc M. Spivak}, {\it Calculus (3 ed.)}, Houston, Texas: Publish or Perish, 1994, ISBN 978-0-914098-89-8.

%\bibitem{Spira}{\sc R. Spira}, {\it An inequality for the Riemann zeta function}, Duke Math. J. 32 (1965), 247�-250.

%\bibitem{Spira73}{\sc R. Spira}, {\it Calculation of the Ramanujan $\tau$-Dirichlet series}, Math. Comput., 27 (1973), 379--385.

\bibitem{titch_R}{\sc E.C. Titchmarsh}, {\it The Theory of the Riemann Zeta-Function}, 2nd edition, Oxford Univ. Press, 1986.

\bibitem{Trudgian}{\sc T.S. Trudgian}, {\it An improved upper bound for the argument of the Riemann zeta-function on the critical line II}, Journal of Number Theory, 134 (2014),
280-292.

%\bibitem{Titch}{\sc E.C. Titchmarsh}, {\it The theory of functions}, 2nd ed., Oxford University Press, 1939.

%\bibitem{Trudgian}{\sc T. Trudgian}, {\it A short extension of two of Spira's %results}, J. Math. Inequal., 9 (2015), 795--798.

\bibitem{MathWorld}Wolfram MathWorld, \hyperref[https://mathworld.wolfram.com/Xi-Function.html]{https://mathworld.wolfram.com/Xi-Function.html}

\end{thebibliography}
\end{document}